\documentclass[aihp]{imsart}

\RequirePackage{amsthm,amsmath,amsfonts,amssymb}
\RequirePackage[numbers,sort&compress]{natbib}
\RequirePackage[colorlinks,citecolor=blue,urlcolor=blue]{hyperref}
\RequirePackage{graphicx}
\usepackage{indentfirst}
\usepackage{caption}
\usepackage{mathrsfs}
\usepackage{xcolor}

\startlocaldefs
\theoremstyle{plain}

\newtheorem{theorem}{Theorem}
\newtheorem{thmaux}{}[section]   
\newtheorem*{theorem 2.12}{Theorem 2.12}
\newtheorem{lemma}[thmaux]{Lemma}
\newtheorem{definition}[thmaux]{Definition}
\newtheorem{example}[thmaux]{Example}
\newtheorem{corollary}[thmaux]{Corollary}
\theoremstyle{remark}
\newtheorem{remark}{Remark}

\endlocaldefs

\begin{document}
\begin{frontmatter}
\title{Most Probable KAM Tori in Stochastic Hamiltonian Systems Driven by Multiplicative Noise}
\runtitle{Most Probable KAM Tori in Stochastic Hamiltonian Systems}

\begin{aug}
	\author[A]{\inits{X.}\fnms{Xinze}~\snm{Zhang}\ead[label=e1]{zhangxz24@mails.jlu.edu.cn}},
	\author[A,B]{\inits{Y.}\fnms{Yong}~\snm{Li}\ead[label=e2]{liyong@jlu.edu.cn}},
	\author[A]{\inits{X.}\fnms{Xue}~\snm{Yang}\ead[label=e3]{xueyang@jlu.edu.cn}}\thanks{corresponding author.}.
	\address[A]{School of Mathematics, Jilin University, ChangChun, People's Republic of China}
	
	\address[B]{Center for Mathematics and Interdisciplinary Sciences,\\    ~~~ Northeast Normal University, ChangChun, People's Republic of China\\
		\printead{e1,e2,e3}}
	
\end{aug}

\begin{abstract}
This paper investigates the effect of state-dependent multiplicative noise on the integrable structure of Hamiltonian systems. Under a local high-dimensional Lamperti-type condition, we derive the Onsager--Machlup functional in the corresponding flattening coordinates. When the diffusion vector fields are Hamiltonian, the resulting action is minimized uniquely by the deterministic Hamiltonian trajectory. Combining this characterization with a finitely differentiable KAM theorem, we prove that, under suitable assumptions, the invariant tori with Diophantine frequencies of an integrable Hamiltonian system persist, in the sense of most probable paths, under a small deterministic perturbation and multiplicative noise. Furthermore, as the intensity of the multiplicative noise tends to zero, we establish a localized large deviation principle that characterizes the asymptotic probability of solution trajectories deviating from these invariant tori.
\end{abstract}

\begin{keyword}[class=MSC]
\kwd[Primary ]{70H08}
\kwd{60H10}
\kwd[; secondary ]{82C35}
\kwd{60F10}
\end{keyword}

\begin{keyword}
\kwd{Stochastic Hamiltonian systems}
\kwd{Invariant tori}
\kwd{Multiplicative noise}
\kwd{Onsager--Machlup functional}
\kwd{Large deviation}
\kwd{KAM theory}
\end{keyword}

\end{frontmatter}

\section*{Statements and Declarations}
\begin{itemize}
	\item Ethical approval\\
	Not applicable.
	
	\item Data Availability Statement.\\
	No datasets were generated or analysed during the current study.
	
	\item The Conflict of Interest Statement. \\
	We have no conflicts of interest to disclose.
	
	\item Funding\\
	The second author (Y. Li) was supported by National Natural Science Foundation of China (Grant No. 12471183, 12531009). The third author (X. Yang) was supported by National Natural Science Foundation of China (Grant No. 12371191).
\end{itemize}

\section{Introduction}

This paper focuses on the persistence of invariant tori in stochastic Hamiltonian systems, particularly examining their stability under random perturbations. We consider the following stochastic Hamiltonian system driven by multiplicative noise in the Stratonovich sense:
\begin{equation}\label{1}
	\mathrm dX_t = J \nabla H(X_t)\, \mathrm dt + \sigma(X_t) \circ \mathrm dW_t,
\end{equation}
where $ t \in [0, T] $, \(X_t \in \mathbb{D} \subset \mathbb{R}^{2n}\), \(\mathbb{D}\) is a nonempty bounded open set, and \(\overline{\mathbb{D}}\) denotes its closure. Here $ H:\overline{\mathbb{D}} \to \mathbb{R} $ is the Hamiltonian, \(J\) is the standard symplectic matrix, \(\nabla H(X)\) denotes the gradient of \(H\), and $ \sigma:\overline{\mathbb{D}} \to \mathbb{R}^{2n \times 2n} $ is the diffusion coefficient.

By integrating the Onsager-Machlup functional, the theory of most probable paths, large deviation principles, and KAM theory, we establish a framework for analyzing the most probable paths and stability of the system under stochastic conditions, thereby revealing the persistence of invariant tori in the sense of most probable paths.

Hamiltonian systems are foundational in classical mechanics, with applications across physics, astronomy, mechanical systems, and more. The core framework, dating back to the early 19th century, was introduced by William Rowan Hamilton. Hamiltonian mechanics describes the state of a system by canonical coordinates \(q\in\mathbb{R}^{n}\) and conjugate momenta \(p\in\mathbb{R}^{n}\). The Hamiltonian function \(H(q,p)\) represents the total energy of the system, including both kinetic and potential energy. The time evolution of the system is governed by Hamilton's equations
\begin{equation}\label{2}
	\mathrm dX_t = J \nabla H(X_t)\, \mathrm dt.
\end{equation}

This formulation not only provides a precise description of system dynamics but also ensures energy conservation and preservation of symplectic geometry. However, when subjected to perturbations, the system’s behavior can become complex. In particular, understanding how to maintain long-term stability under small perturbations presents a critical challenge.

To address these challenges, the KAM theory was a significant breakthrough in the 20th century. Kolmogorov \cite{1} hypothesized that when Hamiltonian systems are subject to small perturbations, some of the invariant tori (regular orbital structures) would persist and avoid chaotic behavior. Arnold \cite{2} and Moser \cite{3} subsequently provided rigorous proofs of this conjecture, formalizing what is now known as the KAM theory. The core result demonstrates that under small perturbations, most of invariant tori continue to exist, preserving the system’s quasi-periodic motions. This result profoundly advanced the study of Hamiltonian systems' stability under perturbations. Subsequent relevant developments are referred to as \cite{4, 5, 7, 8, 9, 10, 13}, and so on.

The original KAM framework was designed primarily for deterministic perturbations, leaving the question of its applicability in stochastic settings unresolved. As a result, validating the extension of the KAM theory under stochastic perturbations and quantifying the stability of invariant tori has emerged as a central problem in stochastic Hamiltonian system research. In recent years, stochastic Hamiltonian systems have attracted substantial attention and have become a rapidly expanding and compelling topic in stochastic dynamics. Wu \cite{15} developed a unified framework for large and moderate deviations in stochastic Hamiltonian systems, yielding quantitative estimates for rare-event probabilities. Li \cite{17} established an averaging principle for completely integrable stochastic Hamiltonian systems, clarifying their long-time behavior under small noise. Davini and Siconolfi \cite{17.5} investigated one-dimensional critical Hamilton-Jacobi equations with stationary ergodic Hamiltonians. Zhang \cite{18} analyzed stochastic Hamiltonian flows and proposed computational methods based on the Bismut formula. For further recent developments, see \cite{18.5, 19, 20, 21, 22} and so on.

The viewpoint adopted in this paper, namely, analyzing the existence and persistence of invariant tori in stochastic Hamiltonian systems in the sense of most probable paths, is novel. This viewpoint was developed for additive-noise Hamiltonian systems in \cite{47} and for a stochastic Schr"odinger equation in \cite{56}. The present paper extends this framework to a class of state-dependent multiplicative diffusions satisfying the local flattening condition (C3). The state-dependent distortion of maximum-norm path tubes is handled by a high-dimensional Lamperti-type local transformation and a direct conditional small-ball analysis. Although several new restrictive assumptions are imposed in our analysis, we believe that this work is of substantial significance.

The main results of this paper are as follows:

\textbf{1) We establish the Onsager-Machlup functional for high-dimensional stochastic ordinary differential equations driven by multiplicative noise in the Stratonovich sense.}

The Onsager-Machlup functional is a fundamental analytical tool for the study of path probabilities and rare events. It characterizes the most probable transition paths among all sufficiently smooth trajectories in a noise-driven system. In a probabilistic framework, it quantifies the relative likelihood of different paths and thus plays a role analogous to that of the action functional in classical mechanics. The Onsager-Machlup functional originated in the work of Onsager \cite{23} and Machlup \cite{24} in 1953, where it was introduced to describe the probability density functional for diffusion processes with linear drift and constant diffusion coefficients. Subsequently, in 1957, Tisza and Manning \cite{25} extended this theory to nonlinear equations, and in the same year, Stratonovich \cite{26} provided a rigorous theoretical framework. Further important developments can be found in \cite{43,44,45,46}, among others.

Most of the existing work on the Onsager-Machlup functional has been carried out in the setting of additive noise. For multiplicative noise, the available results fall roughly into two directions. The first concerns equations in Euclidean spaces. To the best of our knowledge, only one-dimensional results are currently available in this setting, see \cite{57}. The second concerns equations on Riemannian manifolds, see \cite{58,59}. In that framework, the Riemannian metric is prescribed in terms of the diffusion coefficient, and therefore the corresponding results do not cover the Euclidean setting considered here. Inspired by the approach in \cite{57}, we establish the Onsager-Machlup functional for equation \eqref{1} in the high-dimensional case under the restrictive assumption \((C3)\). 

We first state the standing assumptions on \(H\) and \(\sigma\) that will be imposed throughout the paper.
\begin{itemize}
	\item[(C1)] The Hamiltonian function \( H \in C^l(\overline{\mathbb{D}}, \mathbb{R}) \) with $ l > 2(n + 1) $, and its gradient \(\nabla H\) is Lipschitz continuous on \(\overline{\mathbb D}\) with Lipschitz constant \(L\).
	
	\item[(C2)] The diffusion coefficient \( \sigma \in C^2( \overline{\mathbb{D}}, \mathbb{R}^{2n \times 2n}) \) is symmetric. In addition, \( \sigma \) satisfies a uniform ellipticity condition: there exist constants \( 0 < \lambda \leq \Lambda \) such that
	\[
	\lambda |v|^2 \leq v^\top \sigma(x)\sigma(x)^\top v \leq \Lambda |v|^2, \quad \forall v \in \mathbb{R}^{2n}, ~\forall x \in \overline{\mathbb{D}}.
	\]
	Consequently, the pointwise inverse matrix field \(x\mapsto\sigma(x)^{-1}\) is continuous, bounded, and symmetric.
	
	\item[(C3)] There exist open sets \(\widetilde{\mathcal O}\subset\mathbb R^{2n}\) and \(\mathcal O\Subset\mathbb D\), and a \(C^2\)-diffeomorphism \(U:\widetilde{\mathcal O}\to\mathcal O\) onto \(\mathcal O\) such that
	\[
	DU(y)=\sigma(U(y)),\qquad y\in\widetilde{\mathcal O},
	\]
	where \(DU\) denotes the Jacobian matrix of \(U\).
\end{itemize}

The first main result of this paper is stated as follows.
For \(h=(h^1,\ldots,h^{2n})\in C([0,T];\mathbb R^{2n})\), set
\[
 \|h\|_{\infty,*}:=\max_{1\le j\le 2n}\sup_{0\le t\le T}|h^j(t)|.
\]
Fix \(x_0\in\mathcal O\), let \(y_0:=U^{-1}(x_0)\), and let \(\varphi(0)=x_0\) with \(\varphi([0,T])\Subset\mathcal O\). The local maximum tube generated by \(U\) is
\begin{equation}\label{local maximum tube}
 \mathcal K_U(\varphi,\varepsilon)
 :=\left\{x\in C([0,T];\mathcal O):x(0)=x_0,\ 
 \bigl\|U^{-1}\circ x-U^{-1}\circ\varphi\bigr\|_{\infty,*}\le\varepsilon\right\}.
\end{equation}
This is the ordinary maximum tube in the local coordinates generated by \(U\).

\begin{theorem}\label{T1}
	Assume that Conditions \((C1)\)-\((C3)\) hold. Let \(X\) solve \eqref{1} with \(X_0=x_0\in\mathcal O\), and let \(\varphi\) be absolutely continuous with \(\varphi(0)=x_0\), \(\varphi([0,T])\Subset\mathcal O\), and \(U^{-1}\circ\varphi-y_0\in\mathbb H_0\), where \(\mathbb H_0\) is the Cameron--Martin space defined in Section~2. Then
	\begin{equation*}
	 \lim_{\varepsilon\to 0}
	 \frac{\mathbb P\bigl(X\in\mathcal K_U(\varphi,\varepsilon)\bigr)}
	 {\mathbb P\bigl(\|W\|_{\infty,*}\le\varepsilon\bigr)}
	 =\exp\left\{-\frac12\mathcal A(\varphi)\right\},
	\end{equation*}
	where the denominator is independent of \(\varphi\), and
	\begin{equation}\label{OM action}
	 \begin{aligned}
	 \mathcal A(\varphi)
	 =\int_0^T\left|\sigma(\varphi_s)^{-1}
	 \bigl(J\nabla H(\varphi_s)-\dot\varphi_s\bigr)\right|^2\,\mathrm ds
	 -\int_0^T(\operatorname{div}\sigma)(\varphi_s)\cdot
	 \bigl(\sigma(\varphi_s)^{-1}J\nabla H(\varphi_s)\bigr)\,\mathrm ds.
	 \end{aligned}
	\end{equation}
	Here \(\operatorname{div}\sigma=(\operatorname{div}\sigma_1,\ldots,
	\operatorname{div}\sigma_{2n})\), with \(\sigma_\alpha\) denoting the \(\alpha\)-th column of \(\sigma\).
\end{theorem}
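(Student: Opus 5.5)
The plan is to reduce everything to an additive-noise equation by the Lamperti-type change of variables in (C3), and then to apply the classical Onsager--Machlup argument for additive noise in the maximum norm.

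\textbf{Step 1: Lamperti transform.} Set \(Y_t:=U^{-1}(X_t)\). This is defined up to the exit time \(\tau\) of \(X\) from \(\mathcal O\). Since \(U\) is \(C^2\), the Stratonovich chain rule applies. Using \(DU(y)=\sigma(U(y))\), it gives
\[
\mathrm dY_t=DU(Y_t)^{-1}\circ\mathrm dX_t=b(Y_t)\,\mathrm dt+\mathrm dW_t,\qquad b(y):=\sigma(U(y))^{-1}J\nabla H(U(y)),\qquad t<\tau .
\]
Write \(\psi:=U^{-1}\circ\varphi\), so that \(\psi(0)=y_0\) and \(\psi-y_0\in\mathbb H_0\). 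Then, as long as \(\varepsilon\) is smaller than the distance from \(\psi([0,T])\) to \(\partial\widetilde{\mathcal O}\), we have
\[
\{X\in\mathcal K_U(\varphi,\varepsilon)\}=\{\|Y-\psi\|_{\infty,*}\le\varepsilon\}.
\]
On this event the path never leaves a fixed compact \(K\Subset\widetilde{\mathcal O}\).

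To work globally, replace \(b\) by a field \(\tilde b\) on \(\mathbb R^{2n}\) that is \(C^1\), bounded, and has bounded derivatives, and that agrees with \(b\) on a neighbourhood of \(K\). This is possible because \(b\) is \(C^1\) on \(\widetilde{\mathcal O}\) by (C1)--(C3). Let \(\tilde Y\) solve \(\mathrm d\tilde Y=\tilde b(\tilde Y)\mathrm dt+\mathrm dW\), \(\tilde Y_0=y_0\). By pathwise uniqueness, \(Y\) and \(\tilde Y\) coincide on the tube event. Hence the ratio in the theorem equals
\[
\frac{\mathbb P(\|\tilde Y-\psi\|_{\infty,*}\le\varepsilon)}{\mathbb P(\|W\|_{\infty,*}\le\varepsilon)} .
\]

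\textbf{Step 2: additive-noise Onsager--Machlup.} For \(\tilde Y\) we use Girsanov's theorem twice: once to remove the drift \(\tilde b\), and once to shift by \(\psi-y_0\in\mathbb H_0\). This writes the numerator as
\[
\mathbb E\Big[\exp\Big\{\int_0^T(\tilde b(\psi+W)-\dot\psi)\cdot\mathrm dW-\tfrac12\int_0^T|\tilde b(\psi+W)-\dot\psi|^2\mathrm ds\Big\}\mathbf 1_{\{\|W\|_{\infty,*}\le\varepsilon\}}\Big]\exp\Big\{-\tfrac12\int_0^T|\dot\psi|^2\,\mathrm ds\Big\}.
\]
We then take the conditional expectation given \(\|W\|_{\infty,*}\le\varepsilon\) and decompose the exponent as follows.

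\begin{itemize}
\item The Lebesgue terms converge uniformly to their values at \(W=0\).
\item The term \(\int\dot\psi\cdot\mathrm dW\) is a Cameron--Martin integral. It is handled by the standard fact that \(\mathbb E[\exp(c\int f\,\mathrm dW)\mid\|W\|_{\infty,*}\le\varepsilon]\to1\) for deterministic \(f\). This uses that the \(\|\cdot\|_{\infty,*}\)-ball is a product of independent one-dimensional balls, so the symmetry and correlation arguments work coordinatewise.
\item The term \(\int\tilde b(\psi+W)\cdot\mathrm dW\) is Taylor expanded: \(\tilde b(\psi+W)=\tilde b(\psi)+\nabla\tilde b(\psi)W+o(\varepsilon)\). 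The first piece is treated like the Cameron--Martin term. In the second piece we split \(\nabla\tilde b\) into symmetric and antisymmetric parts. Itô's formula applied to \(\tfrac12 W^\top\nabla\tilde b(\psi)W\) turns the symmetric part into \(-\tfrac12\int_0^T\operatorname{div}\tilde b(\psi_s)\,\mathrm ds\) plus terms that are \(O(\varepsilon)\) on the event. The antisymmetric part is a Lévy-area-type integral, whose exponential moments conditioned on the small ball tend to \(1\); this is the classical Ikeda--Watanabe / Shepp--Zeitouni estimate.
\end{itemize}

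Combining these, we obtain the limit
\[
\exp\Big\{-\tfrac12\int_0^T|b(\psi)-\dot\psi|^2\mathrm ds-\tfrac12\int_0^T\operatorname{div}b(\psi)\,\mathrm ds\Big\}.
\]
The remainder term needs \(\tilde b\in C^1\) with a uniformly continuous derivative, which holds on \(K\). Since only \(C^1\) regularity of \(b\) is available (not \(C^2\)), the Taylor step needs care. I expect this conditional small-ball analysis, with only \(C^1\) drift and the non-Euclidean max-norm, to be the main obstacle. The first thing I would try is approximating \(\nabla\tilde b\) along \(\psi\) by step functions and using the exponential-moment lemma on each subinterval.

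\textbf{Step 3: back to \(\varphi\).} Since \(\dot\psi=DU(\psi)^{-1}\dot\varphi=\sigma(\varphi)^{-1}\dot\varphi\), we get \(|b(\psi)-\dot\psi|^2=|\sigma(\varphi)^{-1}(J\nabla H(\varphi)-\dot\varphi)|^2\), which is the first term of \(\mathcal A\). For the divergence, write \(F=J\nabla H\) and use \(\partial_{y_i}=\sum_k\sigma_{ki}\partial_{x_k}\) together with \(\partial_k\sigma^{-1}=-\sigma^{-1}(\partial_k\sigma)\sigma^{-1}\). This gives
\[
\operatorname{div}_y b=-\sum_m(\operatorname{div}\sigma_m)(\sigma^{-1}F)_m+\operatorname{div}F=-(\operatorname{div}\sigma)\cdot\sigma^{-1}J\nabla H,
\]
because \(\operatorname{div}(J\nabla H)=0\). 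Substituting this into the exponent from Step 2 yields \(\exp\{-\tfrac12\mathcal A(\varphi)\}\). Finally, the denominator \(\mathbb P(\|W\|_{\infty,*}\le\varepsilon)\) clearly does not depend on \(\varphi\).
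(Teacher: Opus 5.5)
Your argument is correct and is essentially the paper's proof, written in the flattened coordinates. The paper's density $\mathcal R$ is exactly the Girsanov density of your equation $\mathrm dY=b(Y)\,\mathrm dt+\mathrm dW$ relative to $y_0+W$. Its matrix $F_s=D[\sigma(\cdot)^{-1}J\nabla H(\cdot)](\varphi_s)\sigma(\varphi_s)$ is your $\nabla b(\psi_s)$, and the trace identity \eqref{14} is your computation of $\operatorname{div}_y b$. Expanding $b$ directly at $\psi$ only spares you the second Taylor remainder coming from $U$ (the paper's $B_{23}$). Your symmetric/antisymmetric split plays the role of the paper's diagonal/off-diagonal split.

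Two corrections are needed.

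First, the step you single out as the main obstacle is not one. By (C1)--(C2), $g:=\sigma^{-1}J\nabla H$ is $C^2$ on $\overline{\mathbb D}$, and $U$ is $C^2$ by (C3), so $b=g\circ U\in C^2(\widetilde{\mathcal O})$. Hence the Taylor remainder is $O(\varepsilon^2)$ on the ball. More importantly, the It\^o-formula step for the symmetric part needs $s\mapsto\nabla b(\psi_s)=Dg(\varphi_s)\sigma(\varphi_s)$ to be absolutely continuous, and it is, because $\psi\in H^1$. The paper uses exactly this fact for $F^{ii}$, so no step-function approximation is required.

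Second, after the Cameron--Martin shift the exponent is exactly $\int_0^T(\tilde b(\psi_s+W_s)-\dot\psi_s)\cdot\mathrm dW_s-\frac12\int_0^T|\tilde b(\psi_s+W_s)-\dot\psi_s|^2\,\mathrm ds$. The extra factor $\exp\{-\frac12\int_0^T|\dot\psi_s|^2\,\mathrm ds\}$ in your display counts the shift twice. The limit you state afterwards is nevertheless the correct one.
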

\begin{remark}
	In one dimension, (C3) reduces to the ordinary differential equation \(U'(y)=\sigma(U(y))\), which has a local \(C^2\)-diffeomorphic solution whenever \(\sigma\) is sufficiently regular and nonvanishing. In higher dimensions, write \(V_j(x):=\sigma(x)e_j\), \(j=1,\ldots,2n\). Uniform ellipticity makes \((V_1,\ldots,V_{2n})\) a local frame. If these vector fields commute pairwise, \([V_j,V_k]=0\), their local flows commute; the inverse function theorem applied to the joint flow map then gives local coordinates satisfying \(\partial_{y_j}U(y)=V_j(U(y))\). Equivalently, \(DU(y)=\sigma(U(y))\). Thus pairwise commutation is a concrete local sufficient condition for (C3).
\end{remark}

\textbf{2) We prove that the most probable paths of the stochastic Hamiltonian system \eqref{1} coincide exactly with those of the corresponding deterministic Hamiltonian system \eqref{2}.}

The Onsager-Machlup functional quantifies the relative likelihood of continuous trajectories in the path space of a stochastic dynamical system. Its minimizer therefore corresponds to the most probable path \(\hat{\varphi}\) of the stochastic Hamiltonian system \eqref{1}. A standard variational approach is to regard \(OM(\varphi,\dot{\varphi})\) as the Lagrangian \(L\) and apply the Euler-Lagrange equation
\begin{equation*}
	\frac{\partial L}{\partial y}
	-
	\frac{\mathrm{d}}{\mathrm{d}t}
	\left(
	\frac{\partial L}{\partial \dot{y}}
	\right)
	=0.
\end{equation*}
In the present setting, the admissible paths have a fixed initial point and a free terminal point.

In general, however, the resulting variational equation does not yield an explicit most probable path. Consequently, it is difficult to determine whether the most probable dynamics retains the structure of a nearly integrable Hamiltonian system when the multiplicative noise is small. This constitutes a major obstacle to the analysis of stochastic Hamiltonian systems with multiplicative noise. To overcome this difficulty, we impose the following additional assumption:
\begin{itemize}
	\item[(C4)] Each column of the diffusion coefficient \(\sigma\) is a Hamiltonian vector field. More precisely, if we write
	\[
	\sigma(x)=[\sigma_1(x),\dots,\sigma_{2n}(x)],
	\]
	where \(\sigma_\alpha(x)\) denotes the \(\alpha\)-th column vector field, then
	\[
	\sigma_\alpha(x)=J\nabla G_\alpha(x),\quad \alpha=1,\dots,2n.
	\]
	Here \(G_\alpha\in C^2(\mathcal O)\) is a Hamiltonian function.
\end{itemize}

The second main result of this paper is stated as follows.
\begin{theorem}\label{T2}
	Assume that Conditions \((C1)\)-\((C4)\) hold. Fix \(x_0\in\mathcal O\) and let \(\varphi^*\) be the solution of
	\[
	 \dot\varphi_t^*=J\nabla H(\varphi_t^*),\qquad \varphi_0^*=x_0,
	\]
	on \([0,T]\), with \(\varphi^*([0,T])\Subset\mathcal O\). Then \(\varphi^*\) is the unique most probable path with respect to the maximum tubes \(\mathcal K_U(\cdot,\varepsilon)\) among the admissible paths with initial point \(x_0\), in the sense of Definition~\ref{definition MPP}.
\end{theorem}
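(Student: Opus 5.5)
The plan is to use Theorem~\ref{T1} to turn the most-probable-path problem into minimizing the action $\mathcal A$ over admissible paths with initial point $x_0$. Then show that under (C4) the second, non-quadratic term of $\mathcal A$ vanishes identically. What remains is a nonnegative quadratic functional that is zero exactly on the deterministic Hamiltonian trajectory. Definition~\ref{definition MPP} presumably characterizes the most probable path as the maximizer of the limit ratio in Theorem~\ref{T1}, since the denominator $\mathbb P(\|W\|_{\infty,*}\le\varepsilon)$ does not depend on $\varphi$. So it suffices to prove two things: $\mathcal A(\varphi)\ge \mathcal A(\varphi^*)$ for every admissible $\varphi$, and equality forces $\varphi=\varphi^*$.

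\textbf{Step 1 (the divergence term vanishes).} By (C4), $\sigma_\alpha=J\nabla G_\alpha$, so
\[
\operatorname{div}\sigma_\alpha=\sum_{i}\partial_i (J\nabla G_\alpha)_i=\operatorname{tr}\bigl(J\,\nabla^2 G_\alpha\bigr)=0 .
\]
This holds because $J$ is antisymmetric and $\nabla^2G_\alpha$ is symmetric, which makes the trace of their product zero. The derivatives exist in the needed sense because $\sigma\in C^2$ by (C2): each $\sigma_\alpha$ is $C^1$, so $\operatorname{div}\sigma_\alpha$ is well defined, and the identity $\partial_i\sigma_\alpha^j=(J\nabla^2G_\alpha)_{ji}$ holds classically. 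The second integral in \eqref{OM action} is therefore zero, and
\[
\mathcal A(\varphi)=\int_0^T\bigl|\sigma(\varphi_s)^{-1}(J\nabla H(\varphi_s)-\dot\varphi_s)\bigr|^2\,\mathrm ds\ \ge 0 .
\]

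\textbf{Step 2 (minimizer and uniqueness).} First check that $\varphi^*$ is admissible. It is $C^1$ with $\varphi^*([0,T])\Subset\mathcal O$. Also $U^{-1}\circ\varphi^*$ is $C^1$ with derivative $\sigma(\varphi^*)^{-1}J\nabla H(\varphi^*)$, which is bounded; hence $U^{-1}\circ\varphi^*-y_0\in\mathbb H_0$. Plugging in gives $\mathcal A(\varphi^*)=0$, so $\varphi^*$ is a global minimizer.

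For uniqueness, suppose $\mathcal A(\varphi)=0$. Uniform ellipticity in (C2) makes $\sigma^{-1}$ invertible, so $\dot\varphi_s=J\nabla H(\varphi_s)$ for a.e.\ $s$. Since $\varphi$ is absolutely continuous, it satisfies the integral equation $\varphi_t=x_0+\int_0^tJ\nabla H(\varphi_s)\,\mathrm ds$. Because $\nabla H$ is Lipschitz by (C1), Gronwall's inequality applied to $|\varphi_t-\varphi^*_t|$ gives $\varphi=\varphi^*$. Equivalently, the limit ratio in Theorem~\ref{T1} equals $1$ only at $\varphi^*$ and is strictly less than $1$ for every other admissible path.

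\textbf{Main obstacle.} The main difficulty is matching this to the exact wording of Definition~\ref{definition MPP}. If the definition compares paths through the limiting ratio $\mathbb P(X\in\mathcal K_U(\varphi,\varepsilon))/\mathbb P(X\in\mathcal K_U(\psi,\varepsilon))$, I would take quotients of the two limits in Theorem~\ref{T1}, which is legitimate because the common denominator is positive. If instead the definition asks for a minimizer of the OM functional among admissible paths (fixed start, free end), then Steps 1--2 apply directly. In that case one should remark that the free terminal endpoint causes no difficulty: we never use the Euler--Lagrange equation or transversality conditions, only the positivity of the integrand. A small point to verify is that each $G_\alpha$ is regular enough on a neighborhood of the path. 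Only $C^1$ regularity of $\sigma_\alpha=J\nabla G_\alpha$ is actually used, and that is supplied by (C2).
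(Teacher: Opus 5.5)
Your proposal is correct and follows essentially the same route as the paper. Condition (C4) removes the divergence term via $\operatorname{tr}(JD^2G_\alpha)=0$, leaving a nonnegative quadratic action that vanishes exactly on $\varphi^*$ by invertibility of $\sigma$ and Gronwall; taking the quotient of the two limits from Theorem~\ref{T1} then gives the ratio $\exp\{-\frac12\mathcal A(\varphi)\}<1$. Definition~\ref{definition MPP} is indeed phrased through the $\limsup$ of the ratio of tube probabilities, so the first of the two readings in your ``main obstacle'' paragraph is the one that applies.
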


\textbf{3) We further establish a large deviation principle and identify the corresponding rate function.}

The theory of large deviations concerns the probabilities of rare events in stochastic systems and their exponential rates of decay. For this reason, we consider the small-noise limit and study the following stochastic Hamiltonian system:
\begin{equation}\label{31}
	\mathrm dX_t^\gamma
	=
	J\nabla H(X_t^\gamma)\,\mathrm dt
	+
	\gamma \sigma(X_t^\gamma)\circ \mathrm dW_t,
	\qquad
	X_0^\gamma=x_0 \in \mathbb{D},
	\qquad
	t\in[0,T].
\end{equation}
Here \(W=(W_t)_{0\le t\le T}\) is a \(2n\)-dimensional standard Brownian motion, and \(\gamma>0\) denotes the noise intensity.

Let
\[
 \mathscr C_{x_0}:=\{x\in C([0,T];\mathbb R^{2n}):x(0)=x_0\}
\]
with the uniform topology, and let $ \tau_{\mathbb D}^{\gamma}:=\inf\{t\ge0:X_t^\gamma\notin\mathbb D\} $. For \(\varphi\in\mathscr C_{x_0}\), define
\[
\mathbb I_{\mathbb D}(\varphi)
=
\begin{cases}
	\displaystyle
	\frac12\int_0^{T}
	\left|
	\sigma(\varphi_s)^{-1}\bigl(\dot\varphi_s-J\nabla H(\varphi_s)\bigr)
	\right|^2\,\mathrm ds,
	& \begin{array}{l}
	  \varphi\in H^1([0,T];\mathbb R^{2n}),~~
	  \varphi([0,T])\Subset\mathbb D,
	  \end{array}\\[3ex]
	+\infty, & \text{otherwise}.
\end{cases}
\]

The third main result of this paper is stated as follows.
\begin{theorem}\label{T3}
	Assume that Conditions \((C1)\)--\((C2)\) hold, and let
	\(\mathbb A\subset\mathscr C_{x_0}\) be Borel. Suppose that there exists
	a compact set \(K\Subset\mathbb D\) such that $ \bigcup_{\varphi\in\overline{\mathbb A}}\varphi([0,T])\subset K $, where closure and interior are taken in \(\mathscr C_{x_0}\). Then
	\[
	-\inf_{\varphi\in \mathbb A^\circ}\mathbb I_{\mathbb D}(\varphi)
	\le
	\liminf_{\gamma\to 0}\gamma^2
	\log\mathbb P\bigl(X^\gamma\in\mathbb A,\tau_{\mathbb D}^\gamma>T\bigr)
	\le
	\limsup_{\gamma\to 0}\gamma^2
	\log\mathbb P\bigl(X^\gamma\in\mathbb A,\tau_{\mathbb D}^\gamma>T\bigr)
	\le
	-\inf_{\varphi\in\overline{\mathbb A}}\mathbb I_{\mathbb D}(\varphi).
	\]
\end{theorem}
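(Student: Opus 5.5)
The plan is to reduce Theorem~\ref{T3} to the classical Freidlin--Wentzell large deviation principle for an SDE with globally Lipschitz, bounded, uniformly elliptic coefficients. The reduction is a localization argument that uses the compact set \(K\).

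\textbf{Step 1: extension of the coefficients.} Choose an open set \(G\) with \(K\Subset G\Subset\mathbb D\). Extend \(J\nabla H\) and \(\sigma\) from \(\overline G\) to globally defined fields \(b\) and \(\tilde\sigma\) on \(\mathbb R^{2n}\) such that:
\begin{itemize}
\item \(b\) is bounded and Lipschitz;
\item \(\tilde\sigma\) is \(C^2\) with bounded derivatives;
\item \(\tilde\sigma\tilde\sigma^\top\) is uniformly elliptic with constants comparable to \(\lambda,\Lambda\);
\item \(b=J\nabla H\) and \(\tilde\sigma=\sigma\) on \(G\).
\end{itemize}
One way is to multiply by a cutoff and interpolate \(\sigma\) with \(\sqrt{\lambda}I\) via a partition of unity; positivity is preserved because each piece is positive definite. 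Convert the Stratonovich equation for the extended coefficients to Itô form. The drift becomes \(b+\frac{\gamma^2}{2}c(x)\), where \(c\) is the bounded Itô--Stratonovich correction built from \(D\tilde\sigma\,\tilde\sigma\). Let \(\tilde X^\gamma\) denote the global solution.

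\textbf{Step 2: Freidlin--Wentzell for \(\tilde X^\gamma\).} The \(O(\gamma^2)\) drift term is exponentially negligible: solutions with and without it are exponentially equivalent by a Gronwall estimate. Hence \(\tilde X^\gamma\) satisfies an LDP on \(\mathscr C_{x_0}\) with speed \(\gamma^{-2}\) and good rate function
\[
\tilde{\mathbb I}(\varphi)=\frac12\int_0^T\bigl|\tilde\sigma(\varphi_s)^{-1}(\dot\varphi_s-b(\varphi_s))\bigr|^2\,\mathrm ds
\]
for \(\varphi\in H^1\), and \(+\infty\) otherwise. This follows from the contraction principle applied to the Itô map, or from Azencott's/Freidlin--Wentzell's theorem. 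It yields the standard upper bound on closed sets and lower bound on open sets.

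\textbf{Step 3: transfer to the localized event.} By pathwise uniqueness, \(X^\gamma=\tilde X^\gamma\) up to the exit time from \(G\). Every path in \(\overline{\mathbb A}\) stays in \(K\subset G\), so
\[
\{X^\gamma\in\mathbb A,\ \tau^\gamma_{\mathbb D}>T\}\supseteq\{\tilde X^\gamma\in\mathbb A\},
\]
with equality up to null sets. Indeed, if \(\tilde X^\gamma\in\mathbb A\), then \(\tilde X^\gamma\) never leaves \(K\), so it coincides with \(X^\gamma\) on \([0,T]\) and \(\tau_{\mathbb D}^\gamma>T\). Conversely, on \(\{X^\gamma\in\mathbb A,\tau^\gamma_{\mathbb D}>T\}\) the path \(X^\gamma\) lies in \(K\subset G\), hence equals \(\tilde X^\gamma\). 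The probabilities are therefore identical, and Step 2 gives the bounds with \(\tilde{\mathbb I}\).

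It then remains to check that \(\tilde{\mathbb I}=\mathbb I_{\mathbb D}\) on \(\overline{\mathbb A}\). Any \(\varphi\in\overline{\mathbb A}\) satisfies \(\varphi([0,T])\subset K\Subset\mathbb D\), where the coefficients agree. So \(\tilde{\mathbb I}(\varphi)=\mathbb I_{\mathbb D}(\varphi)\) whenever \(\varphi\in H^1\), and both equal \(+\infty\) otherwise. Since \(\mathbb A^\circ\subset\overline{\mathbb A}\), both infima coincide.

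\textbf{Main obstacle.} The main technical point is Step 1: constructing a globally uniformly elliptic, \(C^2\), symmetric extension of \(\sigma\) with bounded derivatives. Symmetry and positivity are preserved under convex combinations of symmetric positive definite matrices. I would also verify that the Stratonovich correction does not alter the rate function; this is the exponential-equivalence estimate, which relies on the boundedness of \(c\).
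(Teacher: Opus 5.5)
Your architecture matches the paper's. You cut the coefficients off outside a neighbourhood of $K$, apply a global small-noise LDP, identify $\{X^\gamma\in\mathbb A,\tau_{\mathbb D}^\gamma>T\}$ with $\{\widetilde X^\gamma\in\mathbb A\}$ by pathwise uniqueness, and check that the global rate function equals $\mathbb I_{\mathbb D}$ on $\overline{\mathbb A}$. The only difference is the LDP engine. The paper uses a weak-convergence criterion with $b_\varepsilon=\widehat b+\varepsilon\widehat c$, $\varepsilon=\gamma^2$, which absorbs the It\^o--Stratonovich correction into the drift. You instead use classical Freidlin--Wentzell and discard the $\gamma^2$-term by exponential equivalence. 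Either works.

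The step that fails is Step 1, which you yourself flagged. Condition (C2) makes $\sigma$ symmetric and invertible, but not positive definite. The paper's own example is indefinite: in the variables $(q_1,p_1)$ it contains the block $\bigl(\begin{smallmatrix}\kappa_1p_1&1\\1&0\end{smallmatrix}\bigr)$, whose determinant is $-1$. When $\sigma$ has a negative eigenvalue, the smallest eigenvalue $\lambda_{\min}\bigl(\chi\sigma+(1-\chi)\sqrt\lambda I\bigr)=\chi\lambda_{\min}(\sigma)+(1-\chi)\sqrt\lambda$ changes sign as $\chi$ decreases from $1$ to $0$. So your interpolant is singular somewhere outside $G$. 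There both the claimed global ellipticity and your formula for $\widetilde{\mathbb I}$ via $\widetilde\sigma^{-1}$ break down.

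No better interpolation rescues this in general. A continuous invertible extension to the contractible space $\mathbb R^{2n}$ forces $\sigma|_{\overline G}$ to be null-homotopic in $GL_{2n}(\mathbb R)$. This fails, for instance, for $n=1$ on an annulus with the symmetric orthogonal field $\sigma(x)=\bigl(\begin{smallmatrix}\cos\theta&\sin\theta\\ \sin\theta&-\cos\theta\end{smallmatrix}\bigr)$, $\theta=\arg x$, whenever $K$ contains a loop encircling the hole.

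The remedy is that ellipticity away from $K$ is never needed. Take, as the paper does, $\widetilde\sigma=\chi\sigma$ with $\chi=1$ near $K$, and cut off the drift and the It\^o correction the same way. These coefficients are only bounded and Lipschitz, which suffices for Freidlin--Wentzell provided you write the rate function in control form:
\[
\widetilde{\mathbb I}(\varphi)=\inf\Bigl\{\tfrac12\int_0^T|u_s|^2\,\mathrm ds:\ \dot\varphi=\widetilde b(\varphi)+\widetilde\sigma(\varphi)u\Bigr\}.
\]
For $\varphi\in\overline{\mathbb A}$ the range lies in $K$, where $\widetilde\sigma=\sigma$ is invertible. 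The control is then forced to be $u=\sigma(\varphi)^{-1}\bigl(\dot\varphi-J\nabla H(\varphi)\bigr)$, so $\widetilde{\mathbb I}=\mathbb I_{\mathbb D}$ there. Only these values enter the bounds, so your Step 3 goes through unchanged.
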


\textbf{4) We establish the persistence of invariant tori for the stochastic Hamiltonian system \eqref{1} in the sense of most probable paths.}

The fourth result combines the preceding path-probability characterization with classical deterministic KAM theory. In the nearly integrable setting, we prove that invariant tori with Diophantine frequencies persist under small deterministic perturbations and multiplicative noise in the sense of most probable paths, thereby providing a new theoretical perspective on the stability of Hamiltonian structures under random perturbations.

\begin{definition}
	Here a vector $\omega \in \mathbb{R}^d$ is said to be $(\alpha,\tau)$-Diophantine if there exist constants
	$\alpha>0$ and $\tau>0$ such that
	\[
	\left| \omega \cdot k \right|  \ge \frac{\alpha}{\left| k \right|_1^{\tau}},\quad \left| k \right|_1:= \sum_{i=1}^d \left| k_i \right| ,\quad \forall\, k \in \mathbb{Z}^d \setminus \{0\}.
	\]
\end{definition}

The last main result of this paper is stated as follows.
\begin{theorem}\label{T4}
	Consider the stochastic Hamiltonian system \eqref{1} and assume that Conditions \((C1)\)--\((C4)\) hold. Suppose that there is a \(C^l\)-symplectic action--angle chart $ \Psi:\mathbb T^n\times\mathbb U\longrightarrow
	\mathbb D_{\mathrm{aa}}\Subset\mathcal O $ in which \(\mathbb U\subset\mathbb R^n\) is a bounded domain and
	\[
	 H\circ\Psi(\theta,I)=H_0(I)+P(\theta,I).
	\]
	Assume \(H_0\in C^l(\overline{\mathbb U})\) and
	\(P\in C^l(\mathbb T^n\times\overline{\mathbb U})\).
	Then every finite-time orbit of the corresponding deterministic
	Hamiltonian system \eqref{2} contained in \(\mathbb D_{\mathrm{aa}}\) is the
	unique most probable path of \eqref{1} with the same initial point.

	Assume in addition that \(n\ge2\), \(\nu>n\), \(l>2\nu\), and that
	\(H_0\) is Kolmogorov nondegenerate with
	\[
	 \sup_{I\in\mathbb U}
	 \bigl\|(D_I^2H_0(I))^{-1}\bigr\|_2<\infty.
	\]
	If
	\[
	 \eta:=\|P\|_{C^l(\mathbb T^n\times\mathbb U)}
	\]
	is sufficiently small, then there exists a Cantor family of invariant
	Lagrangian tori of \(H_0+P\) corresponding, through a small
	frequency-preserving deformation, to interior invariant tori of \(H_0\)
	with \((\alpha,\tau)\)-Diophantine frequencies, where
	\[
	 \tau=\nu-1,\qquad
	 \alpha\simeq\eta^{1/2-\nu/l}.
	\]
	The deterministic Hamiltonian flow on each surviving torus is conjugate
	to the rigid Diophantine rotation with the same frequency, and every
	finite-time orbit on that torus is the unique most probable path of
	\eqref{1} with the same initial point. No samplewise invariance of the
	stochastic trajectories is asserted.

	Finally, if \(\sigma\) is replaced by \(\gamma\sigma\), then, for every fixed \(T>0\), the local large deviation bounds of Theorem~\ref{T3} hold with speed \(\gamma^{-2}\) and rate function \(\mathbb I_{\mathbb D}\).
\end{theorem}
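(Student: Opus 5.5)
The proof assembles Theorems~\ref{T2} and \ref{T3} with a finitely differentiable KAM theorem; no new probabilistic estimate is needed. There are three parts.

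\emph{Part 1: deterministic orbits are most probable.} Let $x_0\in\mathbb D_{\mathrm{aa}}$ and let $\varphi^*$ be an orbit of \eqref{2} on $[0,T]$ with $\varphi^*([0,T])\subset\mathbb D_{\mathrm{aa}}$. The set $\varphi^*([0,T])$ is compact, and $\mathbb D_{\mathrm{aa}}\Subset\mathcal O$, so $\varphi^*([0,T])\Subset\mathcal O$. Thus $\varphi^*$ satisfies all hypotheses of Theorem~\ref{T2}, which shows that it is the unique most probable path with initial point $x_0$.

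The mechanism behind Theorem~\ref{T2} is worth recording. Under (C4), $\operatorname{div}\sigma_\alpha=\operatorname{div}(J\nabla G_\alpha)=0$, so the second integral in \eqref{OM action} vanishes. Hence $\mathcal A\ge0$, with equality exactly when $\dot\varphi=J\nabla H(\varphi)$; uniqueness then comes from uniqueness for the Lipschitz ODE \eqref{2}. The only point to check is that the action--angle chart $\Psi$ is used solely to describe $H$, while the tubes $\mathcal K_U$ are still formed in the $U$-coordinates of (C3). No compatibility between $\Psi$ and $U$ is required.

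\emph{Part 2: KAM.} Work in the $(\theta,I)$ coordinates, where $H\circ\Psi=H_0+P$ with $P$ of $C^l$-size $\eta$. I will invoke a finitely differentiable KAM theorem of Pöschel/Salamon type; Salamon's ``The Kolmogorov--Arnold--Moser theorem'' with $l>2\nu$, $\tau=\nu-1$ is the natural choice. It requires $n\ge2$, Kolmogorov nondegeneracy of $H_0$ with a uniform bound on $(D^2_IH_0)^{-1}$, and $\eta$ small relative to $\alpha^2$ up to a power of the regularity loss.

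For each $(\alpha,\tau)$-Diophantine $\omega$ in $\nabla H_0(\mathbb U')$, where $\mathbb U'$ is the set of points of $\mathbb U$ at distance at least a fixed multiple of $\alpha$ from $\partial\mathbb U$, the theorem gives an embedding $\theta\mapsto(\theta+u(\theta),I_\omega+v(\theta))$. Its image is an invariant Lagrangian torus of $H_0+P$, the flow on it is conjugate to $\theta\mapsto\theta+\omega t$, and $u,v$ are small in $C^1$. The smallness condition reads roughly $\eta\lesssim\alpha^2\,\alpha^{2\nu/l}\dots$. Optimizing it with $\alpha\simeq\eta^{1/2-\nu/l}$ is the exponent bookkeeping, and I expect this to be the main obstacle. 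The difficulty is matching the precise quantitative form of the cited theorem, namely the $C^l$ versus $C^{l-\nu}$ loss and the shrinking of the domain by $\alpha$, to the stated scaling. The set of such $\omega$ has positive measure, which is what makes this a Cantor family.

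Mapping these tori by the symplectic chart $\Psi$ gives invariant Lagrangian tori of $H$ inside $\mathbb D_{\mathrm{aa}}$. Any finite-time orbit on such a torus stays inside $\mathbb D_{\mathrm{aa}}$, so Part~1 applies to it. The tori are built only for the deterministic flow, so no samplewise invariance of the stochastic trajectories is claimed.

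\emph{Part 3: large deviations.} Replacing $\sigma$ by $\gamma\sigma$ gives \eqref{31}. Hypotheses (C1)--(C2) are unaffected by this scaling, so Theorem~\ref{T3} applies verbatim and gives the local bounds with speed $\gamma^{-2}$ and rate $\mathbb I_{\mathbb D}$. As a consequence, $\mathbb I_{\mathbb D}$ vanishes on the KAM orbits and is positive on sets whose closure stays a positive uniform distance away from them. This quantifies the exponential rarity of deviating from the surviving tori.
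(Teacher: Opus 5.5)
\textbf{Where your proposal matches the paper.} Part~1 and Part~3 are the paper's argument. In Part~1 you use the compact range of the orbit inside $\mathbb D_{\mathrm{aa}}\Subset\mathcal O$ and then Theorem~\ref{T2}. In Part~3 you apply Theorem~\ref{T3} directly to \eqref{31}. The architecture of Part~2 is also the paper's: one frequency-preserving invariant embedding per Diophantine frequency of $H_0$ in an interior action set, transported to phase space by the symplectic chart $\Psi$.

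\textbf{The gap.} It lies in the one quantitative step of Part~2, which you explicitly leave open. The relation $\alpha\simeq\eta^{1/2-\nu/l}$ is part of the statement, and nothing in your proposal produces it. Salamon's theorem, as stated, does not produce it either. It is formulated with the Diophantine constant absorbed into a single constant $M$, which also bounds $\|H\|_{C^l}$ and the inverse twist. Its thresholds $\delta_*,c$ depend on $M$ in an untracked way. Rescaling a general $(\alpha,\tau)$-Diophantine $\omega$ to a normalized one makes $\partial_y\tilde H\sim\omega/\alpha$ large, so $M$ must grow with $1/\alpha$, and you are left without any power law. Your heuristic smallness condition also has the wrong shape. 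Inverting the target scaling gives $\eta\simeq\alpha^{2l/(l-2\nu)}=\alpha^{2+4\nu/(l-2\nu)}$. A condition $\eta\lesssim\alpha^{2+2\nu/l}$ would instead lead to the exponent $l/(2l+2\nu)$, not $1/2-\nu/l$. The same issue affects your interior margin "a fixed multiple of $\alpha$". That margin has to be whatever the KAM theorem dictates, since it is what keeps the deformed tori inside $\mathbb T^n\times\mathbb U$.

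\textbf{What fills it.} You need a finitely differentiable KAM theorem with explicit dependence on $\alpha$. Theorem~\ref{T2.9} already records exactly this scaling, and the paper uses its quantitative form \cite[Theorem~4, Part~I and Remark~5(i)]{6}. Its smallness conditions are $0<\alpha\le c_{\mathrm{KAM}}\mathcal K_0$ and $\eta\le c_{\mathrm{KAM}}\mathcal K_0^{-(l+2\nu)/(l-2\nu)}\Theta^{-a}\alpha^{2l/(l-2\nu)}$. Here $\mathcal K_0=\max\{1,\|H_0\|_{C^l}\}$ and $\Theta=\mathcal K_0\sup_I\|(D_I^2H_0(I))^{-1}\|_2$; this is why the uniform inverse-Hessian bound appears as a hypothesis. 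Setting $\alpha=C_*\eta^{(l-2\nu)/(2l)}$ with $C_*$ large makes the second inequality hold identically, and the first holds once $\eta$ is small. The same theorem supplies the rest of what you need:
\begin{itemize}
\item the interior margin $\alpha_*=\alpha^{1/(l-2\nu)}$;
\item the bi-Lipschitz frequency-preserving map $G^*$ from the Diophantine actions to the Cantor set $\mathbb U_*$;
\item the inclusion $B_{\alpha_*/2}(\mathbb U_*)\subset\mathbb U$;
\item the exact conjugacy $\Phi_{\widetilde H}^t\circ K=K\circ R_\omega^t$ on each Lagrangian torus.
\end{itemize}
With this theorem in place of Salamon's, the rest of your argument goes through as you wrote it.
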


The main technical difficulty is the state dependence of \(\sigma(X_t)\), which destroys the direct translated-Gaussian representation used for additive noise in \cite{47}. Under (C3), the diffusion can be flattened locally, but the Girsanov density still contains state-dependent first- and second-order terms. The proof therefore requires localization, conditional small-ball estimates, and a careful transfer between the Cameron--Martin measure and the original measure.

Throughout the proof, we repeatedly employ Girsanov transformations to construct new Brownian motions, which allows us to obtain refined estimates for $ {\mathbb{P}\bigl(X(\cdot)\in \mathcal K_U(\varphi,\varepsilon)\bigr)} $, where \(\mathcal K_U(\varphi,\varepsilon)\) is the maximum tube centered at \(\varphi\) in the local coordinates generated by \(U\). The argument uses the exact coordinate-tube representation, It\^o integration by parts, conditional small-ball estimates, and exponential martingale inequalities. Condition (C3) is the structural hypothesis needed for the coordinate flattening in Theorem~\ref{T1}. Condition (C4) is used only afterward: it makes every diffusion column divergence free and reduces the action to its nonnegative quadratic part in Theorem~\ref{T2}.

The remainder of this paper is organized as follows. In Section 2, we review the required function spaces, define the coordinate-tube Onsager--Machlup action and most probable paths, state a finitely differentiable KAM theorem, and collect the technical lemmas. Sections 3--6 prove Theorems~\ref{T1}--\ref{T4}, respectively. Section 6 also gives an explicit state-dependent multiplicative-noise example satisfying (C1)--(C4), which shows that the structural assumptions are nonempty and not merely formal.

\section{Preliminaries}

\subsection{Basic space and norm}
In this section, we review some fundamental definitions and results concerning asymptotic limits in the Wiener space (see reference \cite{16}), as well as several basic definitions of functional spaces and norms (see reference \cite{52}).

Let \(W=(W_t)_{0\le t\le T}\) be a \(2n\)-dimensional Brownian motion on the complete filtered probability space
\[
 (\Omega,\mathcal F,(\mathcal F_t)_{t\ge0},\mathbb P).
\]
Here \(\Omega=C_0([0,T];\mathbb R^{2n})\) is the classical Wiener space and \(\mathbb P\) is Wiener measure. Define
\begin{displaymath}
	\mathbb H_0
	:=\left\{f\in H^1([0,T];\mathbb R^{2n}):f(0)=0\right\}.
\end{displaymath}
This is the Cameron--Martin space associated with \(W\), equipped with
\begin{displaymath}
	\langle f,g\rangle_{\mathbb H_0}
	:=\int_0^T\langle\dot f_s,\dot g_s\rangle\,\mathrm ds,
	\qquad
	\|f\|_{\mathbb H_0}^2
	:=\int_0^T|\dot f_s|^2\,\mathrm ds.
\end{displaymath}

Let \(\mathcal P:\mathbb H_0\to\mathbb H_0\) be an
orthogonal projection with \(N:=\dim(\mathcal P\mathbb H_0)<\infty\).
If \((h_1,\ldots,h_N)\) is an orthonormal basis of
\(\mathcal P\mathbb H_0\), then
\[
 \mathcal Pf=\sum_{i=1}^N\langle f,h_i\rangle_{\mathbb H_0}h_i,
 \qquad
 \mathcal P^W=\sum_{i=1}^N
 \left(\int_0^T\langle\dot h_i(s),\mathrm dW_s\rangle\right)h_i.
\]
The \(\mathbb H_0\)-valued random variable \(\mathcal P^W\) is independent
of the choice of orthonormal basis.
\begin{definition}
	A sequence of orthogonal projections \((\mathcal P_k)_{k\ge1}\) on \(\mathbb H_0\) is called an approximating sequence if \(\dim(\mathcal P_k\mathbb H_0)<\infty\) and \(\mathcal P_k\) converges strongly to \(\operatorname{Id}_{\mathbb H_0}\) as \(k\to\infty\).
\end{definition}

\begin{definition}\label{definition 2.2}
	A seminorm \(\mathcal N\) on \(\mathbb H_0\) is called measurable if there exists an almost surely finite random variable \(\widetilde{\mathcal N}\) such that, for every approximating sequence \((\mathcal P_k)_{k\ge1}\), \(\mathcal N(\mathcal P_k^W)\) converges to \(\widetilde{\mathcal N}\) in probability and \(\mathbb P(\widetilde{\mathcal N}\le\varepsilon)>0\) for every \(\varepsilon>0\). If \(\mathcal N\) is a norm, it is called a measurable norm.
\end{definition}
The same definitions apply to the scalar Cameron--Martin space on \([0,1]\) used in Lemma~\ref{lemma 2.10}.

\begin{definition}
	Let \(l\in \mathbb{N}\). We denote by $ C^{l}\big(\overline{\mathbb{D}},\mathbb{R}\big) $ the space of all real-valued functions $ f:\overline{\mathbb{D}}\to \mathbb{R} $ such that \(f\) admits an extension \(\tilde f\in C^{l}(\mathcal V,\mathbb{R})\) on some open neighborhood
	\(\mathcal V\subset \mathbb{R}^{2n}\) of \(\overline{\mathbb{D}}\), with \(\tilde f|_{\overline{\mathbb{D}}}=f\).
	Equivalently, all partial derivatives of \(f\) up to order \(l\) exist in \(\mathbb{D}\)
	and extend continuously to \(\overline{\mathbb{D}}\). More precisely,
	\[
	f\in C^{l}\big(\overline{\mathbb{D}},\mathbb{R}\big)
	\quad \Longleftrightarrow \quad
	\partial^{\alpha}f \in C\big(\overline{\mathbb{D}}\big)
	\]
	for every multi-index \(\alpha\in \mathbb{N}^{2n}\) satisfying \(|\alpha|\le l\).
\end{definition}

\begin{definition}
	We denote by $ C^{2}\big(\overline{\mathbb{D}},\mathbb{R}^{2n\times 2n}\big) $ the space of all matrix-valued functions $ A:\overline{\mathbb{D}}\to \mathbb{R}^{2n\times 2n} $ such that each entry \(a_{ij}\) belongs to
	\(C^{2}\big(\overline{\mathbb{D}},\mathbb{R}\big)\) for \(1\le i,j\le 2n\). Equivalently,
	\[
	A\in C^{2}\big(\overline{\mathbb{D}},\mathbb{R}^{2n\times 2n}\big)
	\]
	if and only if all first- and second-order partial derivatives of each component
	\(a_{ij}\) exist in \(\mathbb{D}\) and extend continuously to \(\overline{\mathbb{D}}\).
\end{definition}

\begin{definition}
	For a matrix \(A=[a_{ij}]\in\mathbb R^{2n\times 2n}\), its spectral norm is
	\[
	\|A\|_2:=\max_{|x|=1}|Ax|.
	\]
	Equivalently,
	\[
	\|A\|_2=s_{\max}(A),
	\]
	where \(s_{\max}(A)\) is the largest singular value of \(A\), namely, the square root of the largest eigenvalue of \(A^\top A\).
\end{definition}
\begin{definition}
	Let \(A=[a_{ij}]\in\mathbb R^{2n\times 2n}\). Its Hilbert--Schmidt norm is
	\[
	\|A\|_{\mathrm{HS}}
	:=
	\sqrt{
		\sum_{i=1}^{2n} \sum_{j=1}^{2n} |a_{ij}|^2
	}
	=
	\sqrt{\operatorname{tr}\bigl(A^\top A\bigr)}.
	\]
	Equivalently, if \(s_1(A),\ldots,s_{2n}(A)\) are the singular values of \(A\), then
	\[
	\|A\|_{\mathrm{HS}}
	=
	\sqrt{
		\sum_{i=1}^{2n} s_i(A)^2
	}.
	\]
\end{definition}

\begin{definition}
	Let \(f\) be a continuous function on a compact set \(\mathcal D\subset\mathbb R^{2n}\). Its supremum norm is
	\begin{displaymath}
		\|f\|_{0;\mathcal D}:=\sup_{x\in\mathcal D}|f(x)|,
	\end{displaymath}
	where \(|\cdot|\) denotes the Euclidean norm in the target space.
\end{definition}

For \(h=(h^1,\ldots,h^{2n})\in C([0,T];\mathbb R^{2n})\) and \(0<t\le T\), set
\[
 \|h\|_{\infty,*;[0,t]}
 :=\max_{1\le j\le 2n}\sup_{0\le s\le t}|h^j(s)|,
 \qquad
 \|h\|_{\infty,*}:=\|h\|_{\infty,*;[0,T]}.
\]
It is a measurable completely convex norm on the classical Wiener space, so the conditional exponential estimates below apply to its balls.

\subsection{Onsager-Machlup functional}

For a nondegenerate diffusion, every prescribed continuous path has
probability zero; path likelihood is therefore compared through shrinking
tubes. In the local chart from \((C3)\), fix \(x_0\in\mathcal O\), set $ y_0:=U^{-1}(x_0) $, and introduce the fixed-initial, free-terminal admissible class
\[
\mathscr A_{x_0}(\mathcal O)
:=
\left\{
\varphi\in H^1([0,T];\mathbb R^{2n}):
\varphi(0)=x_0,\ 
\varphi([0,T])\Subset\mathcal O
\right\}.
\]
Since \(U^{-1}\) is \(C^2\), every
\(\varphi\in\mathscr A_{x_0}(\mathcal O)\) satisfies
\[
\eta_\varphi
:=
U^{-1}\circ\varphi-y_0
\in\mathbb H_0,
\]
and
\[
\dot{\eta}_\varphi(t)
=
DU^{-1}(\varphi_t)\dot{\varphi}_t
=
\sigma(\varphi_t)^{-1}\dot{\varphi}_t
\qquad\text{for a.e. }t\in[0,T].
\]
The sample path in a tube is required only to be continuous; it is not
required to belong to the Cameron--Martin space. For
\(\varphi\in\mathscr A_{x_0}(\mathcal O)\), the relevant event is the
coordinate tube \(\mathcal K_U(\varphi,\varepsilon)\) defined in
\eqref{local maximum tube}.

\begin{definition}\label{definition OM action}
	A functional $ \mathcal A: \mathscr A_{x_0}(\mathcal O) \longrightarrow\mathbb R $ is called the Onsager--Machlup action relative to the \(U\)-coordinate maximum tubes if
	\[
	\lim_{\varepsilon\to0}
	\frac{
		\mathbb P\bigl(
		X\in\mathcal K_U(\varphi,\varepsilon)
		\bigr)
	}{
		\mathbb P\bigl(
		\|W\|_{\infty,*}\le\varepsilon
		\bigr)
	}
	=
	\exp\left\{
	-\frac12\mathcal A(\varphi)
	\right\}
	\]
	for every
	\(\varphi\in\mathscr A_{x_0}(\mathcal O)\).
	When
	\[
	\mathcal A(\varphi)
	=
	\int_0^T
	OM(t,\varphi_t,\dot{\varphi}_t)\,\mathrm dt,
	\]
	the integrand
	\(OM(t,\varphi,\dot{\varphi})\)
	is called the corresponding Onsager--Machlup function.
\end{definition}

\begin{definition}\label{definition MPP}
	A path
	\(\varphi^*\in\mathscr A_{x_0}(\mathcal O)\)
	is called a most probable path relative to the
	\(U\)-coordinate maximum tubes if, for every
	\(\varphi\in\mathscr A_{x_0}(\mathcal O)\),
	\[
	\limsup_{\varepsilon\to0}
	\frac{
		\mathbb P\bigl(
		X\in\mathcal K_U(\varphi,\varepsilon)
		\bigr)
	}{
		\mathbb P\bigl(
		X\in\mathcal K_U(\varphi^*,\varepsilon)
		\bigr)
	}
	\le 1.
	\]
	It is unique if the above limit superior is strictly smaller than one
	for every $ \varphi\in \mathscr A_{x_0}(\mathcal O)\setminus\{\varphi^*\} $.
\end{definition}

\subsection{KAM Theory}

In Hamiltonian mechanics, an invariant torus is an invariant submanifold of phase space on which the trajectories are quasi-periodic. Such tori may be viewed as higher-dimensional analogues of periodic orbits: when the components of the frequency vector are rationally independent, the motion on the torus is quasi-periodic. KAM theory is concerned with the stability of these invariant tori under small perturbations. For a nearly integrable Hamiltonian system, whose Hamiltonian consists of an integrable part plus a small perturbation, KAM theory asserts that if the perturbation is sufficiently small and appropriate hypotheses hold, then a family of invariant tori of sufficiently large measure persists, up to a small deformation.

We recall the following qualitative formulation of the finitely differentiable KAM theorem. The precise quantitative statement needed below will be invoked directly in the proof of Theorem~\ref{T4}.

\begin{theorem}[\cite{6}]\label{T2.9}
	Consider a Hamiltonian of the form $ H(\theta, I) = H_0(I) + P(\theta, I) $, where $ \theta \in \mathbb{T}^n $ are the angle variables and $ I \in \mathbb{U} \subset \mathbb{R}^n $ are the action variables. Here, $ H_0(I) $ and $ P(\theta, I) $ are $ C^l $-smooth functions with $ H_0, P \in C^l(\mathbb{T}^n) \times \mathbb{U} $, where $ \mathbb{U} $ is a non-empty bounded domain in $ \mathbb{R}^n $. If $H_0$ is non-degenerate and $l > 2\nu > 2n$, then all the KAM tori of the integrable system $H_0$ whose frequency are $(\alpha, \tau)$-Diophantine, with $\alpha \simeq \eta^{1/2 - \nu / l}$ and $\tau := \nu - 1$, do survive, being only slightly deformed, where $\eta$ is the $C^l$-norm of the perturbation $P$. Moreover, letting $\mathcal{K}$ be the corresponding family of KAM tori of $H$, we have 
	\[
	\mathrm{meas}( \mathbb{T}^n \times \mathbb{U} \setminus \mathcal{K}) = O(\eta^{1/2 - \nu / l}).
	\]
\end{theorem}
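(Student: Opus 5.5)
Since Theorem~\ref{T2.9} is quoted from \cite{6}, the plan below reconstructs the standard route to a finitely differentiable KAM theorem. The idea is to reduce to analytic KAM by analytic smoothing (Jackson--Moser--Zehnder) and an iterative scheme in the spirit of Moser, Salamon and Pöschel.

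\textbf{Step 1 (parametrisation by frequencies).} Using nondegeneracy, the frequency map \(\omega=\nabla H_0(I)\) is a local diffeomorphism. Its inverse is controlled by \(\sup\|(D^2H_0)^{-1}\|\). I would take \(\omega\) as the parameter, and let \(\Omega_{\alpha}\) be the set of \((\alpha,\tau)\)-Diophantine frequencies in \(\nabla H_0(\mathbb U)\) at distance \(\gtrsim\alpha\) from the boundary.

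\textbf{Step 2 (analytic approximation).} Choose a decreasing sequence of widths \(s_j=s_0 2^{-j}\). By the Jackson--Moser--Zehnder lemma, approximate \(P\) by real-analytic \(P_j\), holomorphic on complex strips of width \(s_j\). The approximants satisfy
\[
|P_j-P_{j-1}|_{s_j}\lesssim \eta\, s_{j-1}^{l},\qquad |P_0|_{s_0}\lesssim \eta .
\]
The initial width is fixed by \(s_0^{l}\sim\eta\,\alpha^{-2}s_0^{2\nu}\). Balancing these terms is exactly what produces \(\alpha\simeq\eta^{1/2-\nu/l}\) under \(l>2\nu\).

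\textbf{Step 3 (analytic KAM and iteration).} For each \(j\), apply a quantitative analytic KAM theorem (Kolmogorov/Salamon--Zehnder form) to \(H_0+P_j\) at fixed frequency \(\omega\in\Omega_\alpha\). This gives an analytic embedding \(\Phi_j(\cdot;\omega)\) of \(\mathbb T^n\) that conjugates the flow to the rotation \(\theta\mapsto\theta+\omega t\). It satisfies
\[
|\Phi_j-\Phi_{j-1}|_{s_j/2}\lesssim \alpha^{-1}s_j^{-\tau}\,|P_j-P_{j-1}|_{s_{j-1}}\lesssim \alpha^{-1}\eta\, s_j^{\,l-\tau}.
\]
The small-divisor loss \(\alpha^{-1}s^{-\tau}\) comes from solving the cohomological equation \(\omega\cdot\partial_\theta u=f\). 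By Cauchy estimates and interpolation, \(\sum_j\Phi_j-\Phi_{j-1}\) converges in \(C^{r}\) for \(r<l-2\nu\). The limit \(\Phi(\cdot;\omega)\) is an invariant Lagrangian torus of \(H_0+P\) carrying rotation \(\omega\), and it is \(C^r\)-close to the unperturbed torus \(\{I=I(\omega)\}\). Lagrangian property and invariance pass to the limit because they hold for each \(\Phi_j\) up to errors that vanish as \(j\to\infty\).

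\textbf{Step 4 (measure estimate).} The complement of \(\Omega_\alpha\) has measure \(O(\alpha)\), summing resonant strips \(\sum_k\alpha|k|^{-\tau-1}\) with \(\tau>n-1\), plus a boundary layer of size \(O(\alpha)\). Pulling back through the frequency map, and using Lipschitz (Whitney) dependence of \(\Phi\) on \(\omega\), gives
\[
\mathrm{meas}(\mathbb T^n\times\mathbb U\setminus\mathcal K)=O(\alpha)=O(\eta^{1/2-\nu/l}).
\]

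\textbf{Main obstacle.} I expect the hardest step to be Step 3: obtaining a uniform geometric convergence scheme. Each analytic step loses derivatives through \(\alpha^{-1}s^{-\tau}\) (plus one more power for the Hessian correction), and this loss must be beaten by the approximation gain \(s^{l}\). This is where \(l>2\nu\) enters. I would first try Salamon's formulation, which keeps the frequency fixed and works with a Newton-type estimate at each scale, so that only finitely many constants need to be tracked. A secondary difficulty is Whitney regularity in \(\omega\), which is needed for the measure bound. I would handle it by differentiating the cohomological equation in \(\omega\), at the cost of one additional power of \(\alpha^{-1}s^{-\tau}\).
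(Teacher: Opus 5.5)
The paper gives no proof of this statement. It imports it from \cite{6} and later uses the quantitative version \cite[Theorem~4, Part~I]{6}. Your architecture (Jackson--Moser--Zehnder smoothing, a fixed-frequency Salamon-type analytic KAM step at each scale, a Lipschitz-in-frequency measure count) appears to be the standard route for such results, but your plan does not deliver the quantitative core of the statement, \(\alpha\simeq\eta^{1/2-\nu/l}\). In Steps~2--3 you smooth only \(P\) and apply analytic KAM to \(H_0+P_j\). That Hamiltonian is not analytic, because \(H_0\) is only \(C^l\). This omission is also what hides the exponent. Every smoothing error you list carries a factor \(\eta\), so nothing forces \(s_0\) to be small. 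With \(s_0\sim1\), the zeroth step needs \(\eta\lesssim\alpha^2\), and step \(j\) needs only \(\eta s_j^{l-2\nu}\lesssim\alpha^2\), which is weaker. So your own conditions are met with \(\alpha\sim\eta^{1/2}\), and no loss \(\nu/l\) appears. Your relation \(s_0^l\sim\eta\alpha^{-2}s_0^{2\nu}\) is a single relation among \(s_0,\eta,\alpha\) and does not yield the stated exponent. Nor can KAM be reapplied at each scale from scratch: \(|P_j|_{s_j}\) stays of order \(\eta\), while the threshold \(\alpha^2s_j^{2\nu}\) tends to zero. The \(j\)-th step must act on \(H_j\circ\phi_{j-1}=N_{j-1}+(H_j-H_{j-1})\circ\phi_{j-1}\), where \(N_{j-1}=H_{j-1}\circ\phi_{j-1}\) is the previous Kolmogorov normal form and the twist of the \(N_j\) is controlled uniformly in \(j\). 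Your bound on \(\Phi_j-\Phi_{j-1}\) tacitly presupposes this.

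The missing idea is that the whole Hamiltonian must be smoothed, and it is the smoothing error of \(H_0\), not of \(P\), that fixes \(s_0\). The analytic approximants of \(H_0\) stay \(\theta\)-independent and differ by \(\lesssim\mathcal K_0 s_j^l\), with \(\mathcal K_0=\max\{1,\|H_0\|_{C^l}\}\). This error is independent of \(\eta\) and enters every step \(j\ge1\). It must therefore stay below the analytic threshold \(\sim\alpha^2s_j^{2\nu}\), up to factors depending on \(\mathcal K_0\) and the twist. Since \(l>2\nu\), the first scale is binding, so \(s_0^{l-2\nu}\lesssim\alpha^2\). The zeroth step has perturbation \(|P_0|_{s_0}\lesssim\eta\), so it requires \(\eta\lesssim\alpha^2s_0^{2\nu}\lesssim\alpha^{2l/(l-2\nu)}\), that is, \(\alpha\gtrsim\eta^{1/2-\nu/l}\), with \(s_0\sim\eta^{1/l}\) at the optimum. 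This is exactly the power of \(\alpha\) in the smallness condition \(\eta\le c_{\mathrm{KAM}}\mathcal K_0^{-(l+2\nu)/(l-2\nu)}\Theta^{-a}\alpha^{2l/(l-2\nu)}\) that the paper quotes from \cite{6} in the proof of Theorem~\ref{T4}. If your intended balance was \(s_0^l\sim\eta\sim\alpha^2s_0^{2\nu}\), it is the right one. However, the \(\eta\)-free term \(s_0^l\) has no source in your scheme until \(H_0\) is smoothed as well.
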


Theorem~\ref{T2.9} records only the qualitative conclusion and the asymptotic order stated in \cite[Theorem~3]{6}. The Cantor parameter set, the frequency-preserving correspondence, and the quantitative smallness conditions used in this paper are supplied by \cite[Theorem~4, Part~I and Remark~5(i)]{6}.

\subsection{Technical lemmas}
In this section, we introduce the technical lemmas used below. The symbol \(|\cdot|\) denotes the Euclidean norm, \(\|\cdot\|_2\) the matrix operator norm, \(\|\cdot\|_{\mathrm{HS}}\) the Hilbert--Schmidt norm, and \(\|\cdot\|_{0;\mathcal D}\) the supremum norm on a specified domain \(\mathcal D\). Path-space maximum norms are written as \(\|\cdot\|_{\infty,*}\). We use \(C\) for a finite positive constant that may change from line to line. The integer \(n\) is the number of degrees of freedom, so the phase-space dimension is \(2n\), and \(T>0\) is the fixed time horizon. Conditional expectations are written as \(\mathbb E[Z\mid\mathcal G]\) when conditioning on a sigma-algebra and as \(\mathbb E[Z\mid D]\) when conditioning on an event \(D\) of positive probability.

The following elementary product lemma permits the conditional exponential
limits to be combined. For a proof, see \cite[pp.~536--537]{39}.
\begin{lemma}[\cite{39}]\label{lemma 2.9}
Let \(N\ge1\), let \(X_1^\varepsilon,\ldots,X_N^\varepsilon\) be real random variables, and let
\(D_\varepsilon\in\mathcal F\) with \(\mathbb P(D_\varepsilon)>0\). Suppose
that, for every \(c\in\mathbb R\) and every \(i\),
\[
 \limsup_{\varepsilon\to 0}
 \mathbb E[\exp\{cX_i^\varepsilon\}\mid D_\varepsilon]\le1.
\]
Then, for arbitrary \(c_1,\ldots,c_N\in\mathbb R\),
\[
 \lim_{\varepsilon\to 0}
 \mathbb E\left[\exp\left\{\sum_{i=1}^Nc_iX_i^\varepsilon\right\}
 \middle|D_\varepsilon\right]=1.
\]
\end{lemma}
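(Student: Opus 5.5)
Lemma~\ref{lemma 2.9} is the statement to prove. I would argue by Hölder's inequality combined with a lower bound from Jensen's inequality. Set \(S^\varepsilon:=\sum_{i=1}^N c_iX_i^\varepsilon\) and write \(\mathbb E_\varepsilon[\cdot]:=\mathbb E[\cdot\mid D_\varepsilon]\), which is a genuine probability expectation since \(\mathbb P(D_\varepsilon)>0\).

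\textbf{Upper bound.} Apply the generalized Hölder inequality with \(N\) equal exponents \(p_i=N\):
\[
\mathbb E_\varepsilon\bigl[e^{S^\varepsilon}\bigr]\le\prod_{i=1}^N\Bigl(\mathbb E_\varepsilon\bigl[e^{Nc_iX_i^\varepsilon}\bigr]\Bigr)^{1/N}.
\]
Each factor has \(\limsup\le1\) by the hypothesis applied with \(c=Nc_i\). Hence \(\limsup_{\varepsilon\to0}\mathbb E_\varepsilon[e^{S^\varepsilon}]\le1\). Possibly infinite expectations cause no trouble: the hypothesis forces each factor to be finite for all small \(\varepsilon\), and the inequality holds in \([0,\infty]\).

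\textbf{Lower bound.} Use \(1=\mathbb E_\varepsilon[e^{S^\varepsilon/2}e^{-S^\varepsilon/2}]\) and Cauchy--Schwarz:
\[
1\le\Bigl(\mathbb E_\varepsilon\bigl[e^{S^\varepsilon}\bigr]\Bigr)^{1/2}\Bigl(\mathbb E_\varepsilon\bigl[e^{-S^\varepsilon}\bigr]\Bigr)^{1/2}.
\]
This gives \(\mathbb E_\varepsilon[e^{S^\varepsilon}]\ge1/\mathbb E_\varepsilon[e^{-S^\varepsilon}]\). Now apply the upper bound to the coefficients \(-c_1,\dots,-c_N\), which is allowed because the hypothesis holds for every real \(c\). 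This yields \(\limsup\mathbb E_\varepsilon[e^{-S^\varepsilon}]\le1\), and therefore \(\liminf\mathbb E_\varepsilon[e^{S^\varepsilon}]\ge1\). Combining the two bounds gives the limit \(1\).

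No step is a real obstacle. The only point needing care is that the hypothesis is required for \emph{all} \(c\in\mathbb R\), including negative and rescaled values \(Nc_i\) and \(-Nc_i\). This is exactly what makes both the Hölder step and the symmetrization step available. One should also note that \(X_i^\varepsilon\) need not be integrable a priori, but finiteness of the relevant exponential moments for small \(\varepsilon\) follows from the hypothesis.
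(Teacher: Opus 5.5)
Your proof is correct: H\"older with $N$ equal exponents gives $\limsup_{\varepsilon\to0}\mathbb E[e^{S^\varepsilon}\mid D_\varepsilon]\le1$. The reversal $\mathbb E[e^{S^\varepsilon}\mid D_\varepsilon]\,\mathbb E[e^{-S^\varepsilon}\mid D_\varepsilon]\ge1$, applied with the coefficients $-c_i$, gives the matching lower bound; this reversal is Cauchy--Schwarz, equivalently Jensen for $x\mapsto1/x$, so the ``Jensen'' you announce and the Cauchy--Schwarz you actually write agree. The paper gives no proof of its own and defers to \cite[pp.~536--537]{39}, and your H\"older-plus-symmetrization argument is the standard proof of this lemma.
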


The following two lemmas are fundamental in the calculation of the Onsager--Machlup functional.
\begin{lemma}[\cite{37}]\label{lemma 2.10}
	Let \(\mathcal N\) be a measurable norm on the scalar Cameron--Martin space
	\(\{h\in H^1([0,1];\mathbb R):h(0)=0\}\). For every \(f\in L^2([0,1])\),
	\begin{displaymath}
		\lim_{\varepsilon\to0}\mathbb E\left[
		\exp\left\{\int_0^1 f(s)\,\mathrm dW_s\right\}
		\middle|\widetilde{\mathcal N}\le\varepsilon\right]=1,
	\end{displaymath}
	where \(\widetilde{\mathcal N}\) is the measurable extension in the sense of Definition~\ref{definition 2.2}.
\end{lemma}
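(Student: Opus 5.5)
The plan is the classical Shepp--Zeitouni argument: reduce to a one-dimensional Gaussian direction, get a two-sided bound from symmetry and Anderson's inequality, and show that the relevant coordinate concentrates near zero under the small-ball conditioning. Write \(B_\varepsilon:=\{\widetilde{\mathcal N}\le\varepsilon\}\). This is a symmetric convex event of positive probability, by Definition~\ref{definition 2.2} and the norm property. Let \(h(t):=\int_0^t f(s)\,\mathrm ds\), an element of the scalar Cameron--Martin space, and set \(\xi:=\int_0^1 f\,\mathrm dW_s\). Then \(\xi\sim N(0,\|f\|_{L^2}^2)\). If \(f=0\) there is nothing to prove, so assume \(\|f\|_{L^2}=1\) after rescaling the constant in the exponent; it then suffices to treat \(\exp\{c\xi\}\) for every \(c\in\mathbb R\).

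\textbf{Two-sided bound.} For the lower bound, \(B_\varepsilon\) is invariant under \(W\mapsto -W\) while \(\xi\mapsto-\xi\). Hence
\[
\mathbb E[e^{c\xi}\mid B_\varepsilon]=\mathbb E[\cosh(c\xi)\mid B_\varepsilon]\ge 1 .
\]
For the upper bound, the Cameron--Martin formula gives \(\mathbb E[e^{c\xi}\mathbf 1_{B_\varepsilon}]=e^{c^2/2}\,\mathbb P(B_\varepsilon-ch)\). Anderson's inequality for the symmetric convex set \(B_\varepsilon\) gives \(\mathbb P(B_\varepsilon-ch)\le\mathbb P(B_\varepsilon)\). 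Together,
\[
1\le\mathbb E[e^{c\xi}\mid B_\varepsilon]\le e^{c^2/2}
\qquad\text{uniformly in }\varepsilon.
\]
Applying the upper bound with \(2c\) shows that \(\{e^{c\xi}\}\) is uniformly integrable under the conditional laws \(\mathbb P(\cdot\mid B_\varepsilon)\).

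\textbf{Concentration of \(\xi\).} It then remains to show \(\mathbb P(|\xi|>\delta\mid B_\varepsilon)\to0\) for every \(\delta>0\). Given this, \(\cosh(c\xi)\to1\) in conditional probability, and uniform integrability yields the limit \(1\). To prove the concentration, decompose \(W=\xi h+W'\) with \(W'\) independent of \(\xi\), and define \(p_\varepsilon(t):=\mathbb P(\widetilde{\mathcal N}(W'+th)\le\varepsilon)\). By Fubini,
\[
\mathbb P(B_\varepsilon,|\xi|>\delta)=\int_{|t|>\delta}\phi(t)p_\varepsilon(t)\,\mathrm dt,
\qquad
\mathbb P(B_\varepsilon)=\int\phi(t)p_\varepsilon(t)\,\mathrm dt ,
\]
where \(\phi\) is the standard normal density. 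Since \(W'\) is a centred Gaussian process, \(p_\varepsilon\) is symmetric and log-concave, hence nonincreasing in \(|t|\). The triangle inequality for the norm gives the key geometric input. If \(|t-s|\,\mathcal N(h)>2\varepsilon\), the events \(\{\widetilde{\mathcal N}(W'+th)\le\varepsilon\}\) and \(\{\widetilde{\mathcal N}(W'+sh)\le\varepsilon\}\) are disjoint. Here one uses that the measurable extension satisfies \(\widetilde{\mathcal N}(w+th)\) equal to the extension evaluated along Cameron--Martin shifts, obtained by passing to the limit along an approximating sequence. So each section \(\{t:W'+th\in B_\varepsilon\}\) is an interval of length at most \(2\varepsilon/\mathcal N(h)\to0\).

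\textbf{Main obstacle.} Converting this shrinking-section property, together with monotonicity of \(p_\varepsilon\), into \(p_\varepsilon(\delta)/p_\varepsilon(0)\to0\) is the step I expect to be hardest, since both quantities tend to zero. I would first attack it via log-concavity, by showing that \(\log p_\varepsilon\) has slope tending to \(-\infty\) on \([\delta/2,\delta]\). Counting the disjoint events along the grid \(t_k=3k\varepsilon/\mathcal N(h)\) bounds \(\sum_k p_\varepsilon(t_k)\le 1\). Comparing this sum with \(p_\varepsilon(0)\) through concavity of \(\log p_\varepsilon\) should then force a sharp decay away from \(0\). If this route fails, I would fall back on the standard trick of approximating \(f\) by step functions. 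For step functions the coordinates \(W_{t_i}-W_{t_{i-1}}\) can be handled directly using the structure of the norm. One then passes to general \(f\in L^2\) via Hölder's inequality and the uniform Anderson bound \(e^{c^2\|f-g\|^2/2}\), which controls the error uniformly in \(\varepsilon\).
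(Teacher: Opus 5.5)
\textbf{There is a genuine gap: the concentration step $\mathbb P(|\xi|>\delta\mid B_\varepsilon)\to0$ is never established.} The paper gives no proof of this lemma; it imports it from the cited reference, so your argument has to stand on its own. The symmetry lower bound, the Cameron--Martin/Anderson upper bound $\mathbb E[e^{c\xi}\mid B_\varepsilon]\le e^{c^2/2}$, and the uniform-integrability reduction are all fine. The log-concavity/disjointness route, however, cannot close. The shrinking-section property only gives absolute bounds, such as $\int_{\mathbb R}p_\varepsilon(t)\,\mathrm dt\le2\varepsilon/\mathcal N(h)$, or $p_\varepsilon(\delta)\lesssim\varepsilon/(\delta\mathcal N(h))$ from your grid count. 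What you need is decay of the ratio $r_\varepsilon:=p_\varepsilon(\delta)/p_\varepsilon(0)$. But $p_\varepsilon(0)=\mathbb P(\widetilde{\mathcal N}(W')\le\varepsilon)$ is typically far smaller than $\varepsilon$ (like $\exp\{-C\varepsilon^{-2}\}$ for the sup norm). Concretely, log-concavity gives $p_\varepsilon(t)\ge p_\varepsilon(0)r_\varepsilon^{t/\delta}$ on $[0,\delta]$. The integral bound then yields only $\log(1/r_\varepsilon)\ge(1-r_\varepsilon)\,\delta\,\mathcal N(h)\,p_\varepsilon(0)/(2\varepsilon)$. Since $\log(1/r)\ge1-r$, this holds automatically as soon as $p_\varepsilon(0)\le2\varepsilon/(\delta\mathcal N(h))$, i.e.\ for all small $\varepsilon$. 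Counting disjoint events cannot detect decay relative to $p_\varepsilon(0)$.

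\textbf{Your fallback has the right mechanism but the wrong dense class.} For a step function $g$, $\int_0^1g\,\mathrm dW$ is a combination of increments $W_{t_i}-W_{t_{i-1}}$. This is $O(\varepsilon)$ on $B_\varepsilon$ only when point evaluations are $\widetilde{\mathcal N}$-continuous. That holds for the sup norm, which is the only case the paper actually uses (via Corollary~\ref{corollary vector CM maximum ball}), and there your H\"older--Anderson approximation is a complete proof. It fails for a general measurable norm. Take $\mathcal N(h)=\|h\|_{L^2[0,1]}$: it is measurable, since the inclusion of $H:=\{h\in H^1([0,1];\mathbb R):h(0)=0\}$ into $L^2$ is Hilbert--Schmidt, but it gives no control of $W_t$.

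The missing idea is to use the dense class adapted to $\mathcal N$. Let $E$ be the completion of $H$ under $\mathcal N$, with inclusion $\iota$. By Gross's theorem, $\mathcal P_k^W$ converges in probability in $E$ to some $\overline W$ with $\widetilde{\mathcal N}=\|\overline W\|_E$. For $\ell\in E^*$ and $g:=\iota^*\ell\in H$, one has $\int_0^1\dot g\,\mathrm dW=\ell(\overline W)$ a.s., hence $|\int_0^1\dot g\,\mathrm dW|\le\|\ell\|_{E^*}\varepsilon$ on $B_\varepsilon$. Moreover, $\iota^*(E^*)$ is dense in $H$ because $\iota$ is injective. Conditional H\"older with exponents $2,2$ together with your Anderson bound gives $\mathbb E[e^{c\xi}\mid B_\varepsilon]\le e^{|c|\|\ell\|_{E^*}\varepsilon}e^{c^2\|f-\dot g\|_{L^2}^2}$. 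Let $\varepsilon\to0$, then $\dot g\to f$ in $L^2$. Combined with your symmetry lower bound, this proves the lemma, and the concentration and uniform-integrability steps become unnecessary.
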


\begin{corollary}\label{corollary vector CM maximum ball}
	Let \(B=(B^1,\ldots,B^{2n})\) be a standard \(2n\)-dimensional
	Brownian motion on \([0,T]\). For every deterministic
	\(g\in L^2([0,T];\mathbb R^{2n})\) and every \(c\in\mathbb R\),
	\[
	 \lim_{\varepsilon\to0}
	 \mathbb E\left[
	 \exp\left\{c\int_0^T\langle g_s,\mathrm dB_s\rangle\right\}
	 \middle|\|B\|_{\infty,*}\le\varepsilon\right]=1.
	\]
\end{corollary}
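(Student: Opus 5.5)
The plan is to reduce the $2n$-dimensional statement to Lemma~\ref{lemma 2.10} applied coordinatewise, using the independence of the coordinates of $B$ together with the product structure of the maximum-norm ball. Write $g=(g^1,\ldots,g^{2n})$ and $\int_0^T\langle g_s,\mathrm dB_s\rangle=\sum_{j=1}^{2n}\int_0^T g^j(s)\,\mathrm dB^j_s$. The event $\{\|B\|_{\infty,*}\le\varepsilon\}$ is the intersection $\bigcap_{j}\{\sup_{t\le T}|B^j_t|\le\varepsilon\}$. Each factor event is measurable with respect to the $j$-th coordinate process alone, and the coordinates are independent.

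With these observations, the conditional expectation factorizes as
\[
\mathbb E\left[\exp\left\{c\int_0^T\langle g_s,\mathrm dB_s\rangle\right\}\middle|\|B\|_{\infty,*}\le\varepsilon\right]
=\prod_{j=1}^{2n}\mathbb E\left[\exp\left\{c\int_0^T g^j(s)\,\mathrm dB^j_s\right\}\middle|\sup_{t\le T}|B^j_t|\le\varepsilon\right].
\]
This identity holds because the joint law of $(B^1,\ldots,B^{2n})$ is the product measure. Each factor event also has positive probability. It therefore suffices to prove the scalar statement for each factor, with $f=cg^j\in L^2([0,T])$.

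For the scalar case we rescale time to $[0,1]$. Set $\beta_u:=T^{-1/2}B^j_{Tu}$, which is a standard Brownian motion on $[0,1]$. Then $\int_0^T cg^j(s)\,\mathrm dB^j_s=\int_0^1 \tilde f(u)\,\mathrm d\beta_u$, where $\tilde f(u)=cT^{1/2}g^j(Tu)\in L^2([0,1])$. The conditioning event becomes $\{\sup_{u\le1}|\beta_u|\le \varepsilon T^{-1/2}\}$, and $\varepsilon T^{-1/2}\to0$ as $\varepsilon\to0$.

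To apply Lemma~\ref{lemma 2.10}, take $\mathcal N(h)=\sup_{u\le1}|h(u)|$. This is a measurable norm on the scalar Cameron--Martin space, as recalled after the definition of $\|\cdot\|_{\infty,*}$. Its measurable extension is $\widetilde{\mathcal N}=\sup_u|\beta_u|$: the approximations $\mathcal P_k^W$ converge to $\beta$ uniformly in probability, for instance by the Itô--Nisio theorem, and small balls of Brownian motion have positive probability. Lemma~\ref{lemma 2.10} then gives that each factor tends to $1$, and the finite product tends to $1$.

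The only point needing care is the identification of $\widetilde{\mathcal N}$ with the actual supremum of the rescaled Brownian path. I expect this to be the main obstacle, though it is standard; I would cite the classical fact that the sup norm is a measurable norm with extension equal to the sup of the Wiener path. Once that is settled, the factorization and the rescaling are routine.
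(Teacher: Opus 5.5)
Your proposal is correct and is essentially the paper's proof. Both arguments factorize the conditional expectation over coordinates, using independence and the product structure of the maximum-norm ball. Both rescale each coordinate to $[0,1]$ via $T^{-1/2}B^j_{T\cdot}$ with integrand $\sqrt{T}\,g^j(T\cdot)$, and then apply Lemma~\ref{lemma 2.10} to the scalar supremum norm, whose measurable extension is the supremum of the Brownian path. Your It\^o--Nisio remark justifies that last identification, which the paper simply asserts.
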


\begin{proof}
	On the scalar Cameron--Martin space on \([0,1]\), the supremum
	norm is measurable, and its measurable extension in the sense of
	Definition~\ref{definition 2.2} is \(\sup_{0\le u\le1}|B_u^k|\).
	For \(0\le u\le1\), set
	\(\overline B_u^k:=T^{-1/2}B_{Tu}^k\) and
	\(g_{k,T}(u):=\sqrt T\,g_k(Tu)\). Then
	\[
	 \int_0^Tg_k(s)\,\mathrm dB_s^k
	 =\int_0^1g_{k,T}(u)\,\mathrm d\overline B_u^k,
	\]
	while
	\[
	 \{\|B\|_{\infty,*}\le\varepsilon\}
	 =\bigcap_{k=1}^{2n}
	 \left\{\|\overline B^k\|_\infty
	 \le\frac{\varepsilon}{\sqrt T}\right\}.
	\]
	The scalar Brownian motions \(\overline B^k\) are independent. Hence the conditional expectation factorizes over \(k\), and each factor converges to one by Lemma~\ref{lemma 2.10}, applied to the scalar supremum norm and to \(cg_{k,T}\in L^2([0,1])\). The finite product therefore converges to one.
\end{proof}

\begin{lemma}[\cite{60}]\label{lemma 2.15}
	Let \(W=(W^1,\ldots,W^d)\) be a \(d\)-dimensional Brownian motion, let \(\|\cdot\|\) be a completely convex norm on its path space, and fix \(i\in\{1,\ldots,d\}\). Let \(\mathcal F_i\) be the sigma-algebra generated by \(\{W_s^j:j\ne i,\ 0\le s\le T\}\). Suppose that \(\Psi\) is an \(\mathcal F_i\)-measurable process such that, for every \(c\in\mathbb R\),
	\begin{displaymath}
		\lim_{\varepsilon\to0}\mathbb E\left[
		\exp\left\{c^2\int_0^T|\Psi_s|^2\,\mathrm ds\right\}
		\middle|\|W\|<\varepsilon\right]=1.
	\end{displaymath}
	Then, for every \(c\in\mathbb R\),
	\begin{displaymath}
		\lim_{\varepsilon\to0}\mathbb E\left[
		\exp\left\{c\int_0^T\Psi_s\,\mathrm dW_s^i\right\}
		\middle|\|W\|<\varepsilon\right]=1.
	\end{displaymath}
\end{lemma}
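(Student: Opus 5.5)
The plan is to condition on \(\mathcal F_i\), which freezes \(\Psi\) and all coordinates \(W^j\) with \(j\ne i\). Conditionally on \(\mathcal F_i\), the component \(W^i\) is still a standard scalar Brownian motion, independent of \(\mathcal F_i\), and \(\Psi\) is a deterministic \(L^2\) function. So the stochastic integral \(Z:=\int_0^T\Psi_s\,\mathrm dW_s^i\) is conditionally a centred Gaussian variable with variance \(\int_0^T|\Psi_s|^2\,\mathrm ds\). The ball event \(\{\|W\|<\varepsilon\}\) becomes, for fixed values of the other coordinates, an event \(\{W^i\in K_\varepsilon\}\), where \(K_\varepsilon\) is the section of the ball in the \(i\)-th coordinate. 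Complete convexity of the norm is exactly what is needed to make this section convex. I will assume, as is part of the notion of completely convex norms in \cite{60} and is clear for \(\|\cdot\|_{\infty,*}\), that it is also symmetric.

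\textbf{Upper bound.} Since \(K_\varepsilon\) is symmetric and convex and the law of \(W^i\) is centred Gaussian, Anderson's inequality gives \(\mathbb P(|Z|>t,\,W^i\in K_\varepsilon\mid\mathcal F_i)\le \mathbb P(|Z|>t\mid\mathcal F_i)\,\mathbb P(W^i\in K_\varepsilon\mid\mathcal F_i)\) for every \(t\ge0\). By symmetry of \(K_\varepsilon\), the conditional law of \(Z\) on \(K_\varepsilon\) is symmetric, so \(e^{cZ}\) may be replaced by \(\cosh(cZ)\), which is even and increasing in \(|Z|\). Integrating the tail inequality then yields
\[
\mathbb E\bigl[e^{cZ}\mathbf 1_{\{\|W\|<\varepsilon\}}\mid\mathcal F_i\bigr]\le \exp\Bigl\{\tfrac{c^2}{2}\int_0^T|\Psi_s|^2\,\mathrm ds\Bigr\}\,\mathbb P\bigl(\|W\|<\varepsilon\mid\mathcal F_i\bigr).
\]
The right-hand side equals \(\mathbb E\bigl[\exp\{\tfrac{c^2}{2}\int_0^T|\Psi_s|^2\,\mathrm ds\}\mathbf 1_{\{\|W\|<\varepsilon\}}\mid\mathcal F_i\bigr]\), since the exponential factor is \(\mathcal F_i\)-measurable. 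Taking expectations and dividing by \(\mathbb P(\|W\|<\varepsilon)\) bounds \(\mathbb E[e^{cZ}\mid\|W\|<\varepsilon]\) by \(\mathbb E[\exp\{(c/\sqrt2)^2\int_0^T|\Psi_s|^2\,\mathrm ds\}\mid\|W\|<\varepsilon]\). Applying the hypothesis with the constant \(c/\sqrt2\) gives \(\limsup_{\varepsilon\to0}\le1\).

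\textbf{Lower bound.} This follows without further symmetry considerations. By Cauchy--Schwarz,
\[
1=\mathbb E\bigl[e^{cZ/2}e^{-cZ/2}\mid D\bigr]^2\le \mathbb E[e^{cZ}\mid D]\,\mathbb E[e^{-cZ}\mid D],
\]
where \(D=\{\|W\|<\varepsilon\}\). The upper bound applied with \(-c\) gives \(\limsup_{\varepsilon\to0}\mathbb E[e^{-cZ}\mid D]\le1\), hence \(\liminf_{\varepsilon\to0}\mathbb E[e^{cZ}\mid D]\ge1\), and the limit equals one.

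The main obstacle is the conditional Anderson step. One must check two things carefully. First, the regular conditional distribution given \(\mathcal F_i\) may be handled pathwise, which is legitimate because \(W^i\) is independent of \(\mathcal F_i\) and \(\Psi\) is \(\mathcal F_i\)-measurable, so \(Z\) is a Wiener integral of a frozen integrand. Second, the section \(K_\varepsilon\) is symmetric and convex for almost every frozen configuration; here the complete convexity of the norm is used.
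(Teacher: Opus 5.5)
Your proposal is correct and is essentially the argument the paper relies on. The paper cites \cite{60} for this lemma, and in the proof of Corollary~\ref{corollary off diagonal maximum ball} it spells out the same scheme: condition on the other coordinates, then apply the symmetric-convex Gaussian bound $\exp\{\frac{c^2}{2}\int_0^T|f_s|^2\,\mathrm ds\}$. The paper gets the lower bound directly from symmetry and $\cosh\ge1$, while you get it by Cauchy--Schwarz from the upper bound at $-c$; both work.

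One small correction: convexity of the sections is automatic for any norm. What complete convexity genuinely supplies is their symmetry under the flip $W^i\mapsto -W^i$ alone, and that is exactly the assumption you make explicit.
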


\begin{corollary}\label{corollary off diagonal maximum ball}
	Let \(B=(B^1,\ldots,B^{2n})\) be a standard \(2n\)-dimensional
	Brownian motion on \([0,T]\), and let
	\(F:[0,T]\to\mathbb R^{2n\times2n}\) be deterministic, Borel measurable,
	and bounded.
	Then, for every \(i\ne j\) and every \(c\in\mathbb R\),
	\[
	 \lim_{\varepsilon\to0}
	 \mathbb E\left[
	 \exp\left\{c\int_0^T F_s^{ij}B_s^j\,\mathrm dB_s^i\right\}
	 \middle|\|B\|_{\infty,*}\le\varepsilon\right]=1.
	\]
\end{corollary}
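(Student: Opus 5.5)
We will derive the statement from Lemma~\ref{lemma 2.15}, taking $d=2n$, the completely convex norm $\|\cdot\|=\|\cdot\|_{\infty,*}$, and the process $\Psi_s:=F_s^{ij}B_s^j$. The index is $i$, and $j\ne i$.

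First we check measurability. Since $F$ is deterministic and $j\ne i$, the process $\Psi$ is a function only of the coordinate $B^j$. It is therefore measurable with respect to $\mathcal F_i=\sigma\{B^k_s:k\ne i,\ 0\le s\le T\}$. It is also adapted, so $\int_0^T\Psi_s\,\mathrm dB^i_s$ is a genuine It\^o integral, and it coincides with the integral appearing in the statement.

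Next we verify the hypothesis of Lemma~\ref{lemma 2.15}. Let $M:=\sup_s|F_s^{ij}|<\infty$. On the event $\{\|B\|_{\infty,*}\le\varepsilon\}$ we have $|B_s^j|\le\varepsilon$ for all $s$, and hence
\[
c^2\int_0^T|\Psi_s|^2\,\mathrm ds\le c^2M^2T\varepsilon^2 .
\]
This gives the pointwise bound
\[
1\le \mathbb E\Bigl[\exp\Bigl\{c^2\int_0^T|\Psi_s|^2\mathrm ds\Bigr\}\Bigm|\|B\|_{\infty,*}\le\varepsilon\Bigr]\le e^{c^2M^2T\varepsilon^2}\to1 .
\]
The lower bound $1$ holds because the integrand is nonnegative. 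So the hypothesis holds for every $c$.

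One technicality remains: Lemma~\ref{lemma 2.15} is stated with the strict ball $\|W\|<\varepsilon$, while the corollary uses the closed ball. There are two ways to handle this. One is to note that $\mathbb P(\|B\|_{\infty,*}=\varepsilon)=0$, since the law of the supremum of $|B^k|$ is continuous, so the two conditionings agree for every $\varepsilon$. Alternatively, the bound above works verbatim for the strict ball. Applying the lemma then yields
\[
\lim_{\varepsilon\to0}\mathbb E\Bigl[\exp\Bigl\{c\int_0^T F^{ij}_sB^j_s\,\mathrm dB^i_s\Bigr\}\Bigm|\|B\|_{\infty,*}\le\varepsilon\Bigr]=1 .
\]

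There is essentially no obstacle. The only points needing care are the $\mathcal F_i$-measurability, which is exactly where $i\ne j$ is used, and matching the closed ball with the open one.
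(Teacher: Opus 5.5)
Your proof is correct, but it takes a different route from the paper's.

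\textbf{Your route.} You apply Lemma~\ref{lemma 2.15} as a black box with \(\Psi=F^{ij}B^j\). The condition \(i\ne j\) supplies the \(\mathcal F_i\)-measurability. On the ball the quadratic variation is at most \(\|F^{ij}\|_{0;[0,T]}^2T\varepsilon^2\), which verifies the lemma's hypothesis trivially.

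\textbf{The paper's route.} The paper does not invoke the lemma. It reproduces the mechanism behind it in this special case, in four moves:
\begin{itemize}
\item It conditions on \(\mathcal H_i=\sigma\{B^k_s:k\ne i,\ 0\le s\le T\}\), using the enlarged filtration in which \(B^i\) is still Brownian. This freezes \(F^{ij}B^j\) as a deterministic integrand for the independent coordinate \(B^i\).
\item Symmetry of the conditional law on \(\{\|B^i\|_\infty\le\varepsilon\}\) gives the lower bound \(1\) via \(\cosh\ge1\).
\item The convex (Anderson-type) Gaussian estimate gives the upper bound \(\exp\{\frac{c^2}{2}\int_0^T|f_s|^2\,\mathrm ds\}\).
\item Restricting the remaining coordinates to the ball then yields the explicit sandwich \(1\le\cdots\le\exp\{\frac{c^2T}{2}\|F^{ij}\|_{0;[0,T]}^2\varepsilon^2\}\).
\end{itemize}

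\textbf{What each buys.} Your argument is shorter and isolates exactly where \(i\ne j\) is used. However, it depends on the precise hypotheses of the cited result. These are complete convexity of \(\|\cdot\|_{\infty,*}\), which the paper asserts, and the open-ball formulation. The paper's argument is self-contained modulo Anderson's inequality. It works directly with the closed ball and gives a non-asymptotic \(O(\varepsilon^2)\) rate.

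\textbf{A clarification on the open versus closed ball.} Only the null-set fact \(\mathbb P(\|B\|_{\infty,*}=\varepsilon)=0\) transfers the lemma's conclusion from the open ball to the closed one. That your bound also holds on the strict ball is what you need to verify the hypothesis. It is not a second way of obtaining the conclusion.
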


\begin{proof}
	Fix \(i\ne j\). Set
	\[
	 \mathcal H_i:=\sigma\{B_s^k:k\ne i,\ 0\le s\le T\}
	\]
	and, after the usual completion,
	\[
	 \mathcal G_t^{(i)}:=\mathcal H_i\vee
	 \sigma\{B_s^i:0\le s\le t\},
	 \qquad 0\le t\le T.
	\]
	Independence of the Brownian coordinates
	implies that \(B^i\) remains a Brownian motion relative to
	\((\mathcal G_t^{(i)})_{0\le t\le T}\). The process
	\(F^{ij}B^j\) is \((\mathcal G_t^{(i)})\)-predictable and is measurable
	with respect to \(\mathcal H_i\) as an \(L^2([0,T])\)-valued random
	element.

	We use the symmetric-convex Gaussian estimate underlying
	Lemma~\ref{lemma 2.15}. If \(f\in L^2([0,T])\) is deterministic, then
	for every \(c\in\mathbb R\),
	\[
	 1\le
	 \mathbb E\left[
	 \exp\left\{c\int_0^Tf_s\,\mathrm dB_s^i\right\}
	 \middle|\|B^i\|_\infty\le\varepsilon\right]
	 \le
	 \exp\left\{\frac{c^2}{2}\int_0^T|f_s|^2\,\mathrm ds\right\}.
	\]
	Indeed, the conditional law on the symmetric convex ball
	\(\{\|B^i\|_\infty\le\varepsilon\}\) is symmetric, which gives the lower bound by
	\(\cosh z\ge1\); the convex Gaussian estimate gives the upper bound.
	Conditioning first on \(\mathcal H_i\), applying this estimate with
	\(f_s=F_s^{ij}B_s^j\), and then restricting the remaining coordinates to
	\(\bigcap_{k\ne i}\{\|B^k\|_\infty\le\varepsilon\}\), we obtain
	\[
	 1\le
	 \mathbb E\left[
	 \exp\left\{c\int_0^TF_s^{ij}B_s^j\,\mathrm dB_s^i\right\}
	 \middle|\|B\|_{\infty,*}\le\varepsilon\right]
	 \le
	 \exp\left\{\frac{c^2T}{2}
	 \|F^{ij}\|_{0;[0,T]}^2\varepsilon^2\right\}.
	\]
	The conclusion follows by squeezing as \(\varepsilon\to0\).
\end{proof}

\section{Proof of Theorem \ref{T1}}

\begin{proof}[Proof of Theorem \ref{T1}]
	We first make the localization and the change of measure precise. Let
	\(\psi:=U^{-1}\circ\varphi\) and choose bounded open sets with smooth
	boundary such that
	\[
	 \psi([0,T])\Subset\widetilde{\mathcal O}_0
	 \Subset\widetilde{\mathcal O}_1
	 \Subset\widetilde{\mathcal O}.
	\]
	Put \(\mathcal O_i:=U(\widetilde{\mathcal O}_i)\), \(i=0,1\), and choose
	\[
	 0<\varepsilon_0<
	 \min_{i=0,1}\operatorname{dist}_{\infty,*}
	 (\psi([0,T]),\partial\widetilde{\mathcal O}_i).
	\]
	Extend \(J\nabla H\) and \(\sigma\) to globally bounded Lipschitz
	coefficients on \(\mathbb R^{2n}\) that agree with the original
	coefficients near \(\overline{\mathcal O}_1\). Pathwise uniqueness and the
	localization below identify the resulting global solution with the original
	local solution on every tube under consideration.
	Thus, whenever \(0<\varepsilon<\varepsilon_0\), both a path in
	\(\mathcal K_U(\varphi,\varepsilon)\) and its coordinate representative
	remain in \(\mathcal O_0\) and \(\widetilde{\mathcal O}_0\), respectively.
	Let \(y_0:=U^{-1}(x_0)\) and define
	\[
	 \zeta:=\inf\{t\ge0:y_0+W_t\notin\widetilde{\mathcal O}_1\}.
	\]
	The corresponding exit time for the original process is
	\[
	 \tau_1:=\inf\{t\ge0:X_t\notin\mathcal O_1\}.
	\]
	The stopped auxiliary process is
	\begin{equation}\label{4}
	 Y_t:=U(y_0+W_{t\wedge\zeta}).
	\end{equation}
	Before \(\zeta\), the Stratonovich chain rule and (C3) give
	\[
	 \mathrm dY_t=DU(y_0+W_t)\circ\mathrm dW_t
	 =\sigma(Y_t)\circ\mathrm dW_t,
	 \qquad Y_0=x_0.
	\]
	Thus the flattening map provides an exact Wiener-coordinate
	representation up to the exit time on which the local chart is valid.

	For comparison, the It\^o forms before the corresponding exit times are
	\begin{equation*}
	 \mathrm dX_t=
	 \left[J\nabla H(X_t)+\frac12\sum_{\alpha=1}^{2n}
	 D\sigma_\alpha(X_t)\sigma_\alpha(X_t)\right]\mathrm dt
	 +\sigma(X_t)\mathrm dW_t
	\end{equation*}
	and
	\begin{equation}\label{6}
	 \mathrm dY_t=
	 \frac12\sum_{\alpha=1}^{2n}
	 D\sigma_\alpha(Y_t)\sigma_\alpha(Y_t)\,\mathrm dt
	 +\sigma(Y_t)\mathrm dW_t.
	\end{equation}
	Since \(\overline{\mathcal O}_1\Subset\mathcal O\), the map
	\(x\mapsto\sigma(x)^{-1}J\nabla H(x)\) is bounded on
	\(\overline{\mathcal O}_1\). Define the stopped exponential
	\begin{equation}\label{stopped Girsanov density}
	 \mathcal R:=\exp\left\{
	 \int_0^{T\wedge\zeta}
	 \left\langle\sigma(Y_s)^{-1}J\nabla H(Y_s),\mathrm dW_s\right\rangle
	 -\frac12\int_0^{T\wedge\zeta}
	 \left|\sigma(Y_s)^{-1}J\nabla H(Y_s)\right|^2\,\mathrm ds\right\}.
	\end{equation}
	Set
	\[
	 \frac{\mathrm d\mathbb Q}{\mathrm d\mathbb P}=\mathcal R.
	\]
	The preceding boundedness implies Novikov's condition and hence
	\(\mathbb E_{\mathbb P}\mathcal R=1\). Girsanov's theorem shows that
	\begin{equation*}
	 \widetilde W_t=W_t-
	 \int_0^{t\wedge\zeta}
	 \sigma(Y_s)^{-1}J\nabla H(Y_s)\,\mathrm ds
	\end{equation*}
	is Brownian under \(\mathbb Q\). Substitution in \eqref{6} gives, for
	\(t<\zeta\),
	\begin{equation*}
	 \mathrm dY_t=
	 \left[J\nabla H(Y_t)+\frac12\sum_{\alpha=1}^{2n}
	 D\sigma_\alpha(Y_t)\sigma_\alpha(Y_t)\right]\mathrm dt
	 +\sigma(Y_t)\mathrm d\widetilde W_t.
	\end{equation*}
	Local Lipschitz continuity yields weak uniqueness for the stopped equation. Therefore
	\[
	 \mathcal L_{\mathbb P}(X_{\cdot\wedge\tau_1})
	 =\mathcal L_{\mathbb Q}(Y_{\cdot\wedge\zeta}).
	\]
	On either coordinate tube with \(\varepsilon<\varepsilon_0\), the
	corresponding exit time exceeds \(T\). Consequently the preceding equality
	of stopped laws gives an exact equality of the tube probabilities. Moreover,
	on that event the stopped density \(\mathcal R\) equals the same exponential
	in \eqref{stopped Girsanov density} with both integrals taken over
	\([0,T]\). All subsequent identities may therefore be written with the
	original coefficients.

	Define
	\[
	 \eta_t:=\psi_t-y_0=U^{-1}(\varphi_t)-y_0,
	 \qquad \eta_0=0.
	\]
	The definition of the tube and \eqref{4} give the exact event identity
	\begin{equation}\label{9}
	 \{Y\in\mathcal K_U(\varphi,\varepsilon)\}
	 =\{\|W-\eta\|_{\infty,*}\le\varepsilon\}
	 =:A_\varepsilon.
	\end{equation}
	It follows that
	\begin{equation}\label{tube probability factorization}
	 \begin{aligned}
	 \mathbb P\bigl(X\in\mathcal K_U(\varphi,\varepsilon)\bigr)
	 &=\mathbb Q\bigl(Y\in\mathcal K_U(\varphi,\varepsilon)\bigr)\\
	 &=\mathbb E_{\mathbb P}\bigl[\mathcal R\mathbf 1_{A_\varepsilon}\bigr]\\
	 &=\mathbb E_{\mathbb P}[\mathcal R\mid A_\varepsilon]\,
	 \mathbb P(A_\varepsilon).
	 \end{aligned}
	\end{equation}
	We complete the proof in three steps.

	\textbf{First step.}
	We estimate the conditional expectation of \(\mathcal R\). Put \(h_t:=Y_t-\varphi_t\). On \(A_\varepsilon\), the mean-value theorem and boundedness of \(DU\) on the compact localization set give \(\|h\|_{\infty,*}\le C\varepsilon\). Moreover, Conditions \((C1)\)--\((C2)\) imply that the map \(x\mapsto\sigma(x)^{-1}J\nabla H(x)\) is \(C^2\) there. Taylor's formula therefore gives
	\begin{equation}\label{10}
	 \begin{aligned}
	 \sigma(Y_t)^{-1}J\nabla H(Y_t)
	 =\sigma(\varphi_t)^{-1}J\nabla H(\varphi_t)
	 +D\!\left[\sigma(\cdot)^{-1}J\nabla H(\cdot)\right](\varphi_t)h_t
	 +R_t^{(2)}.
	 \end{aligned}
	\end{equation}
	The second-order remainder satisfies
	\[
	 \|R^{(2)}\|_{\infty,*}\le C\varepsilon^2.
	\]
	Indeed, with \(\hbar:=W-\eta\),
	\begin{equation}\label{12}
	 \begin{aligned}
	 h_t
	 &=U(y_0+\eta_t+\hbar_t)-U(y_0+\eta_t)\\
	 &=\sigma(\varphi_t)\hbar_t+R_{U,t}^{(2)}.
	 \end{aligned}
	\end{equation}
	The corresponding remainder satisfies
	\[
	 \|R_U^{(2)}\|_{\infty,*}\le C\varepsilon^2.
	\]
	Accordingly, the exponent in \(\mathcal R\) is decomposed as
	\[
	 \begin{aligned}
	 B
	 &:=\int_0^T\left\langle
	 \sigma(Y_s)^{-1}J\nabla H(Y_s),\mathrm dW_s\right\rangle
	 -\frac12\int_0^T
	 \left|\sigma(Y_s)^{-1}J\nabla H(Y_s)\right|^2\,\mathrm ds\\
	 &=\int_0^T\left\langle
	 \sigma(\varphi_s)^{-1}J\nabla H(\varphi_s),\mathrm dW_s\right\rangle
	 +\int_0^T\left\langle
	 D\!\left[\sigma(\cdot)^{-1}J\nabla H(\cdot)\right](\varphi_s)h_s,
	 \mathrm dW_s\right\rangle\\
	 &\quad+\int_0^T\langle R_s^{(2)},\mathrm dW_s\rangle
	 -\frac12\int_0^T
	 \left|\sigma(Y_s)^{-1}J\nabla H(Y_s)\right|^2\,\mathrm ds\\
	 &=: B_1 + B_2 + B_3 + B_4.
	 \end{aligned}
	\]

	1) Estimate of \(B_1\).
	Since \(\varphi\) is absolutely continuous and the map \(x\mapsto\sigma(x)^{-1}J\nabla H(x)\) is \(C^1\) on the compact localization set, the composition \(s\mapsto\sigma(\varphi_s)^{-1}J\nabla H(\varphi_s)\) is absolutely continuous and
	\[
	 \frac{\mathrm d}{\mathrm ds}
	 \bigl(\sigma(\varphi_s)^{-1}J\nabla H(\varphi_s)\bigr)
	 =D\!\left[\sigma(\cdot)^{-1}J\nabla H(\cdot)\right](\varphi_s)
	 \dot\varphi_s
	 \quad\text{for a.e. }s.
	\]
	Writing \(W=\eta+\hbar\), stochastic integration by parts gives, component
	by component,
	\begin{align*}
	 \int_0^T\left\langle
	 \sigma(\varphi_s)^{-1}J\nabla H(\varphi_s),\mathrm dW_s\right\rangle
	 &=\int_0^T\left\langle
	 \sigma(\varphi_s)^{-1}J\nabla H(\varphi_s),\mathrm d\eta_s\right\rangle
	 +\left\langle
	 \sigma(\varphi_T)^{-1}J\nabla H(\varphi_T),\hbar_T\right\rangle\\
	 &\quad-\int_0^T\left\langle\hbar_s,
	 D\!\left[\sigma(\cdot)^{-1}J\nabla H(\cdot)\right](\varphi_s)
	 \dot\varphi_s\right\rangle\,\mathrm ds.
	\end{align*}
	Because \(\|\hbar\|_{\infty,*}\le\varepsilon\) on \(A_\varepsilon\), Cauchy--Schwarz yields
	\[
	 \begin{aligned}
	 \left|\left\langle
	 \sigma(\varphi_T)^{-1}J\nabla H(\varphi_T),\hbar_T\right\rangle
	 -\int_0^T\left\langle\hbar_s,
	 D\!\left[\sigma(\cdot)^{-1}J\nabla H(\cdot)\right](\varphi_s)
	 \dot\varphi_s\right\rangle\,\mathrm ds\right| \le C\varepsilon\left(1+\int_0^T|\dot\varphi_s|\,\mathrm ds\right)
	 \le C_\varphi\varepsilon.
	 \end{aligned}
	\]
	Moreover, the chain rule and \((C3)\) give
	\[
	 \dot\eta_s=DU^{-1}(\varphi_s)\dot\varphi_s
	 =\sigma(\varphi_s)^{-1}\dot\varphi_s
	 \quad\text{for a.e. }s.
	\]
	Consequently,
	\[
	 \int_0^T\left\langle
	 \sigma(\varphi_s)^{-1}J\nabla H(\varphi_s),
	 \mathrm d\eta_s\right\rangle
	 =\int_0^T\left\langle
	 \sigma(\varphi_s)^{-1}J\nabla H(\varphi_s),
	 \sigma(\varphi_s)^{-1}\dot\varphi_s\right\rangle\,\mathrm ds,
	\]
	and, uniformly on \(A_\varepsilon\),
	\begin{equation*}
	 B_1=\int_0^T
	 \left\langle\sigma(\varphi_s)^{-1}J\nabla H(\varphi_s),
	 \sigma(\varphi_s)^{-1}\dot\varphi_s\right\rangle\,\mathrm ds
	 +O(\varepsilon).
	\end{equation*}
	Hence, for every \(c\in\mathbb R\),
	\begin{equation}\label{conditional B1 limit}
	 \begin{aligned}
	 &\lim_{\varepsilon\to0}\mathbb E\left[
	 \exp\left\{c\left(B_1-\int_0^T
	 \left\langle\sigma(\varphi_s)^{-1}J\nabla H(\varphi_s),
	 \sigma(\varphi_s)^{-1}\dot\varphi_s\right\rangle
	 \,\mathrm ds\right)\right\}\middle|A_\varepsilon\right]=1.
	 \end{aligned}
	\end{equation}

	2) Estimate of \(B_2\).
	Substitution of \eqref{12} and the identity
	\(\mathrm dW=\mathrm d\hbar+\mathrm d\eta\) give
	\[
	 \begin{aligned}
	 B_2
	 &=\int_0^T\langle F_s\hbar_s,\mathrm d\hbar_s\rangle
	 +\int_0^T\langle F_s\hbar_s,\mathrm d\eta_s\rangle
	 +\int_0^T\left\langle
	 D\!\left[\sigma(\cdot)^{-1}J\nabla H(\cdot)\right](\varphi_s)
	 R_{U,s}^{(2)},\mathrm dW_s\right\rangle\\
	 &:=B_{21}+B_{22}+B_{23},
	 \end{aligned}
	\]
	where
	\[
	 F_s:=D\!\left[\sigma(\cdot)^{-1}J\nabla H(\cdot)\right](\varphi_s)
	 \sigma(\varphi_s).
	\]
	Under \(\mathbb P\), \(\mathrm d\hbar=\mathrm dW-\dot\eta\,\mathrm dt\).
	Introduce \(\widehat{\mathbb P}\) by the Cameron--Martin change of measure
	under which \(\hbar\) is a standard \(2n\)-dimensional Brownian motion. Then
	\[
	 Z_\eta:=\frac{\mathrm d\mathbb P}{\mathrm d\widehat{\mathbb P}}
	 =\exp\left\{-\int_0^T\langle\dot\eta_s,\mathrm d\hbar_s\rangle
	 -\frac12\|\eta\|_{\mathbb H_0}^2\right\}.
	\]
	Here
	\[
	 \|\eta\|_{\mathbb H_0}^2=\int_0^T|\dot\eta_s|^2\,\mathrm ds.
	\]
	Writing \(D_\varepsilon:=\{\|\hbar\|_{\infty,*}\le\varepsilon\}\), we have
	\[
	 \mathbb E_{\mathbb P}[\exp\{cB_{21}\}\mid A_\varepsilon]
	 =\frac{\mathbb E_{\widehat{\mathbb P}}
	 [\exp\{cB_{21}\}Z_\eta\mathbf1_{D_\varepsilon}]}
	 {\mathbb E_{\widehat{\mathbb P}}
	 [Z_\eta\mathbf1_{D_\varepsilon}]},\qquad c\in\mathbb R.
	\]
	We compute the centered conditional exponential
	moments of \(B_{21}\) under \(\widehat{\mathbb P}\) and then transfer
	them to \(\mathbb P\).

	Decompose \(B_{21}\) into its off-diagonal and
	diagonal parts. Write \(F=(F^{ij})_{1\le i,j\le 2n}\). Then
	\[
	\begin{aligned}
		 B_{21}
		&=\sum_{\substack{1\le i,j\le 2n\\i\ne j}}
		\int_0^T F_s^{ij}\hbar_s^j\,\mathrm d\hbar_s^i
		+\sum_{i=1}^{2n}\int_0^T F_s^{ii}\hbar_s^i\,\mathrm d\hbar_s^i\\
		&=:B_{211}+B_{212}.
	\end{aligned}
	\]
	For \(i\ne j\), the integrand \(F^{ij}\hbar^j\) is predictable and
	measurable with
	respect to the Brownian coordinates other than \(\hbar^i\). On the maximum
	ball,
	\[
	 \left\langle\int_0^\cdot
	 F_s^{ij}\hbar_s^j\,\mathrm d\hbar_s^i\right\rangle_T
	 =\int_0^T|F_s^{ij}\hbar_s^j|^2\,\mathrm ds
	 \le \|F^{ij}\|_{0;[0,T]}^2T\varepsilon^2.
	\]
	In particular, for each \(c\in\mathbb R\),
	\[
	 \mathbb E_{\widehat{\mathbb P}}\left[
	 \exp\left\{c^2\int_0^T|F_s^{ij}\hbar_s^j|^2\,\mathrm ds\right\}
	 \middle|D_\varepsilon\right]
	 \le\exp\{c^2\|F^{ij}\|_{0;[0,T]}^2T\varepsilon^2\}\longrightarrow1.
	\]
	Corollary~\ref{corollary off diagonal maximum ball}, followed by
	Lemma~\ref{lemma 2.9} over the
	finitely many ordered pairs, therefore yields, for every \(c\in\mathbb R\),
	\[
	 \lim_{\varepsilon\to 0}
	 \mathbb E_{\widehat{\mathbb P}}
	 [\exp\{cB_{211}\}\mid\|\hbar\|_{\infty,*}\le\varepsilon]=1.
	\]
	For the diagonal terms, \(F^{ii}\) is absolutely continuous because
	\(\varphi\in H^1([0,T];\mathbb R^{2n})\) and
	\(x\mapsto D[\sigma(\cdot)^{-1}J\nabla H(\cdot)](x)\sigma(x)\) is \(C^1\) on the
	localization set. It\^o's product formula gives
	\[
	 \mathrm d\!\left(F_t^{ii}(\hbar_t^i)^2\right)
	 =(\hbar_t^i)^2\,\mathrm dF_t^{ii}
	 +2F_t^{ii}\hbar_t^i\,\mathrm d\hbar_t^i
	 +F_t^{ii}\,\mathrm dt.
	\]
	Since \(\hbar_0=0\), integration and rearrangement yield
	\begin{equation}\label{diagonal identity}
	 \int_0^T F_s^{ii}\hbar_s^i\,\mathrm d\hbar_s^i
	 =\frac12F_T^{ii}(\hbar_T^i)^2
	 -\frac12\int_0^T(\hbar_s^i)^2\,\mathrm dF_s^{ii}
	 -\frac12\int_0^T F_s^{ii}\,\mathrm ds.
	\end{equation}
	On \(D_\varepsilon\),
	\[
	 \left|F_T^{ii}(\hbar_T^i)^2\right|
	 +\left|\int_0^T(\hbar_s^i)^2\,\mathrm dF_s^{ii}\right|
	 \le\varepsilon^2\bigl(\|F^{ii}\|_{0;[0,T]}+
	 \operatorname{Var}_{[0,T]}F^{ii}\bigr).
	\]
	The conditional exponential moments of these two terms therefore converge
	to one. Summing \eqref{diagonal identity} over \(i\) shows that the only
	nonvanishing contribution of \(B_{212}\) is
	\(-\frac12\int_0^T\operatorname{tr}F_s\,\mathrm ds\).
	Consequently, for every \(c\in\mathbb R\),
	\[
	 \lim_{\varepsilon\to 0}
	 \mathbb E_{\widehat{\mathbb P}}
	 [\exp\{cB_{212}\}\mid D_\varepsilon]
	 =\exp\left\{-\frac c2\int_0^T\operatorname{tr}F_s\,\mathrm ds\right\}.
	\]
	Equivalently, the centered diagonal term
	\(B_{212}+\frac12\int_0^T\operatorname{tr}F_s\,\mathrm ds\)
	has conditional exponential moments converging to one. Combining this
	centered term with the off-diagonal limit by Lemma~\ref{lemma 2.9} yields
	\[
	 \lim_{\varepsilon\to 0}
	 \mathbb E_{\widehat{\mathbb P}}
	 [\exp\{cB_{21}\}\mid D_\varepsilon]
	 =\exp\left\{-\frac c2\int_0^T\operatorname{tr}F_s\,\mathrm ds\right\}.
	\]

	For every \(c\in\mathbb R\), the preceding calculation gives
	\[
	 \mathbb E_{\widehat{\mathbb P}}\left[
	 \exp\left\{c\left(B_{21}
	 +\frac12\int_0^T\operatorname{tr}F_s\,\mathrm ds\right)\right\}
	 \middle|D_\varepsilon\right]\longrightarrow1.
	\]
	Corollary~\ref{corollary vector CM maximum ball} also gives
	\[
	 \mathbb E_{\widehat{\mathbb P}}\left[
	 \exp\left\{-c\int_0^T
	 \langle\dot\eta_s,\mathrm d\hbar_s\rangle\right\}
	 \middle|D_\varepsilon\right]\longrightarrow1.
	\]
	Applying Lemma~\ref{lemma 2.9} to these two centered random variables yields
	the joint limit
	\[
	 \begin{aligned}
	 \mathbb E_{\widehat{\mathbb P}}\left[
	 \exp\left\{c\left(B_{21}
	 +\frac12\int_0^T\operatorname{tr}F_s\,\mathrm ds\right)
	 -\int_0^T\langle\dot\eta_s,\mathrm d\hbar_s\rangle\right\}
	 \middle|D_\varepsilon\right]\longrightarrow1.
	 \end{aligned}
	\]
	The deterministic factor
	\(\exp\{-\|\eta\|_{\mathbb H_0}^2/2\}\) cancels in the conditional
	numerator and denominator. Consequently the exact density-transfer identity
	above gives
	\begin{equation}\label{13}
	 \lim_{\varepsilon\to 0}
	 \mathbb E_{\mathbb P}[\exp\{cB_{21}\}\mid A_\varepsilon]
	 =\exp\left\{-\frac c2\int_0^T\operatorname{tr}F_s\,\mathrm ds\right\},
	 \qquad c\in\mathbb R.
	\end{equation}

	The trace is identified componentwise. For every \(x\) in the localization set,
	\(J\nabla H(x)=\sigma(x)[\sigma(x)^{-1}J\nabla H(x)]\), and hence
	\begin{equation}\label{14}
	 \begin{aligned}
	 &\quad \operatorname{tr}\!\left(
	 D[\sigma(\cdot)^{-1}J\nabla H(\cdot)](x)\sigma(x)\right)\\
	 &=\sum_{\alpha=1}^{2n}\sum_{j=1}^{2n}
	 \partial_j[\sigma(x)^{-1}J\nabla H(x)]_\alpha\,
	 \sigma_{j\alpha}(x)\\
	 &=\sum_{j=1}^{2n}\partial_j
	 \left(\sum_{\alpha=1}^{2n}\sigma_{j\alpha}(x)
	 [\sigma(x)^{-1}J\nabla H(x)]_\alpha\right)
	 -\sum_{\alpha=1}^{2n}\sum_{j=1}^{2n}
	 (\partial_j\sigma_{j\alpha})(x)
	 [\sigma(x)^{-1}J\nabla H(x)]_\alpha\\
	 &=\operatorname{div}(J\nabla H)(x)
	 -(\operatorname{div}\sigma)(x)\cdot
	 [\sigma(x)^{-1}J\nabla H(x)].
	 \end{aligned}
	\end{equation}
	Because \(D^2H\) is symmetric and \(J\) is skew-symmetric,
	\(\operatorname{div}(J\nabla H)=\operatorname{tr}(JD^2H)=0\).
	Combining \eqref{13}--\eqref{14} with \(c=1\), we obtain
	\begin{equation*}
	 \lim_{\varepsilon\to 0}
	 \mathbb E[\exp\{B_{21}\}\mid A_\varepsilon]
	 =\exp\left\{\frac12\int_0^T
	 (\operatorname{div}\sigma)(\varphi_s)\cdot
	 \bigl(\sigma(\varphi_s)^{-1}J\nabla H(\varphi_s)\bigr)
	 \,\mathrm ds\right\}.
	\end{equation*}

	Since \(\eta\in\mathbb H_0\), the second term satisfies the pathwise bound
	\begin{equation}\label{16}
	 |B_{22}|\le
	 \left(\sup_{0\le s\le T}\|F_s\|_2\right)\|\hbar\|_{\infty,*}
	 \int_0^T|\dot\eta_s|\,\mathrm ds\le C\varepsilon
	 \quad\text{on }A_\varepsilon.
	\end{equation}

	For the last component \(B_{23}\), let
	\[
	 \tau_\varepsilon:=\inf\{t\in[0,T]:
	 \|\hbar\|_{\infty,*;[0,t]}>\varepsilon\}\wedge T.
	\]
	The Taylor estimate \eqref{12} gives
	\[
	 \left|D\!\left[\sigma(\cdot)^{-1}J\nabla H(\cdot)\right](\varphi_s)
	 R_{U,s}^{(2)}\right|\le C\varepsilon^2,
	 \qquad 0\le s\le\tau_\varepsilon.
	\]
	Define the stopped martingale
	\[
	 M_t^{(23)}:=\int_0^{t\wedge\tau_\varepsilon}
	 \left\langle
	 D\!\left[\sigma(\cdot)^{-1}J\nabla H(\cdot)\right](\varphi_s)
	 R_{U,s}^{(2)},
	 \mathrm dW_s\right\rangle.
	\]
	Its progressively measurable integrand is bounded by
	\(C\varepsilon^2\), and hence
	\[
	 \langle M^{(23)}\rangle_T\le CT\varepsilon^4.
	\]
	The exponential martingale inequality consequently gives, for \(r>0\),
	\begin{equation}\label{17}
	 \mathbb P(|M_T^{(23)}|>r)
	 \le2\exp\left\{-\frac{r^2}{2CT\varepsilon^4}\right\}.
	\end{equation}
	For every \(c\in\mathbb R\), it also gives
	\begin{equation*}
	 \mathbb E\bigl[\exp\{cM_T^{(23)}\}\bigr]\le
	 \exp\left\{\frac{c^2CT\varepsilon^4}{2}\right\}.
	\end{equation*}
	The Cameron--Martin formula yields
	\[
	 \frac{\mathbb P(A_\varepsilon)}
	 {\mathbb P(\|W\|_{\infty,*}\le\varepsilon)}
	 =\exp\left\{-\frac12\|\eta\|_{\mathbb H_0}^2\right\}
	 \mathbb E_{\widehat{\mathbb P}}\left[
	 \exp\left\{-\int_0^T
	 \langle\dot\eta_s,\mathrm d\hbar_s\rangle\right\}
	 \middle|D_\varepsilon\right].
	\]
	Corollary~\ref{corollary vector CM maximum ball}, independence of the
	Brownian coordinates, and the one-dimensional maximum-ball estimate imply
	\begin{equation}\label{18}
	 \mathbb P(A_\varepsilon)
	 =\mathbb P(\|W-\eta\|_{\infty,*}\le\varepsilon)
	 \ge c_0\exp\{-C_0\varepsilon^{-2}\}
	\end{equation}
	for all sufficiently small \(\varepsilon\). Indeed,
	\[
	 \mathbb P(\|W\|_{\infty,*}\le\varepsilon)
	 =\left[\mathbb P\left(
	 \sup_{0\le t\le T}|B_t|\le\varepsilon\right)\right]^{2n}
	 \ge c\exp\{-C\varepsilon^{-2}\},
	\]
	and the conditional Cameron--Martin factor is bounded below by a positive
	constant for all small \(\varepsilon\).

	On \(\{|M_T^{(23)}|\le r\}\), we have
	\[
	\exp\{-|c|r\}\le\exp\{cM_T^{(23)}\}\le\exp\{|c|r\}.
	\]
	On the complementary event, Cauchy--Schwarz and
	\eqref{17}--\eqref{18} give
	\[
	\frac{
		\mathbb E\!\left[
		\exp\{cM_T^{(23)}\}
		\mathbf 1_{\{|M_T^{(23)}|>r\}\cap A_\varepsilon}
		\right]
	}{
		\mathbb P(A_\varepsilon)
	}
	\le
	C\exp\left\{
	C_0\varepsilon^{-2}
	-\frac{r^2}{8CT\varepsilon^4}
	+C_c\varepsilon^4
	\right\}.
	\]
	With \(r=\varepsilon^{1/2}\), the right-hand side tends to zero; the
	same estimate controls
	\(\mathbb P(|M_T^{(23)}|>r\mid A_\varepsilon)\). Therefore
	\begin{equation}\label{19}
	 \mathbb E[\exp\{cM_T^{(23)}\}\mid A_\varepsilon]
	 \longrightarrow1,\qquad c\in\mathbb R.
	\end{equation}
	Since \(A_\varepsilon\subset\{\tau_\varepsilon=T\}\), one has
	\(M_T^{(23)}=B_{23}\) on \(A_\varepsilon\), and thus
	\begin{equation}\label{20}
	 \mathbb E[\exp\{cB_{23}\}\mid A_\varepsilon]\longrightarrow1,
	 \qquad c\in\mathbb R.
	\end{equation}
	By \eqref{13}--\eqref{14}, \eqref{16}, \eqref{20}, and
	Lemma~\ref{lemma 2.9}, the complete second term satisfies
	\begin{equation}\label{conditional B2 limit}
	 \begin{aligned}
	 &\lim_{\varepsilon\to0}\mathbb E\left[
	 \exp\left\{c\left(B_2-\frac12\int_0^T
	 (\operatorname{div}\sigma)(\varphi_s)\cdot
	 \bigl(\sigma(\varphi_s)^{-1}J\nabla H(\varphi_s)\bigr)
	 \,\mathrm ds\right)\right\}\middle|A_\varepsilon\right]=1,
	 \qquad c\in\mathbb R.
	 \end{aligned}
	\end{equation}

	3) Estimate of \(B_3\).
	The Taylor estimate \eqref{10} gives
	\[
	 |R_s^{(2)}|\le C\varepsilon^2,
	 \qquad 0\le s\le\tau_\varepsilon.
	\]
	Thus
	\[
	 M_t^{(3)}:=\int_0^{t\wedge\tau_\varepsilon}
	 \langle R_s^{(2)},\mathrm dW_s\rangle
	\]
	has quadratic variation bounded by \(CT\varepsilon^4\). Replacing
	\(M^{(23)}\) by \(M^{(3)}\) in \eqref{17}--\eqref{19}, and using
	\(M_T^{(3)}=B_3\) on \(A_\varepsilon\), yields
	\begin{equation}\label{21}
	 \mathbb E[\exp\{cB_3\}\mid A_\varepsilon]\longrightarrow1,
	 \qquad c\in\mathbb R.
	\end{equation}

	4) Estimate of \(B_4\).
	The boundedness of \(DU\) on the localization set gives \(\|Y-\varphi\|_{\infty,*}\le C\varepsilon\) on \(A_\varepsilon\). Since \(x\mapsto\sigma(x)^{-1}J\nabla H(x)\) is Lipschitz there,
	\[
	 \begin{aligned}
	 &\quad\left|\left|\sigma(Y_s)^{-1}J\nabla H(Y_s)\right|^2
	 -\left|\sigma(\varphi_s)^{-1}J\nabla H(\varphi_s)\right|^2\right|\\
	 &\le C\left|
	 \sigma(Y_s)^{-1}J\nabla H(Y_s)
	 -\sigma(\varphi_s)^{-1}J\nabla H(\varphi_s)\right|\\
	 &\le C\varepsilon.
	 \end{aligned}
	\]
	After integration over \([0,T]\),
	\begin{equation*}
	 B_4=-\frac12\int_0^T
	 \left|\sigma(\varphi_s)^{-1}J\nabla H(\varphi_s)\right|^2
	 \,\mathrm ds+O(\varepsilon)
	\end{equation*}
	uniformly on \(A_\varepsilon\).
	Therefore, for every \(c\in\mathbb R\),
	\begin{equation}\label{conditional B4 limit}
	 \begin{aligned}
	 &\lim_{\varepsilon\to0}\mathbb E\left[
	 \exp\left\{c\left(B_4+\frac12\int_0^T
	 \left|\sigma(\varphi_s)^{-1}J\nabla H(\varphi_s)\right|^2
	 \,\mathrm ds\right)\right\}\middle|A_\varepsilon\right]=1.
	 \end{aligned}
	\end{equation}

	Equations \eqref{conditional B1 limit},
	\eqref{conditional B2 limit}, \eqref{21}, and
	\eqref{conditional B4 limit}, followed by Lemma~\ref{lemma 2.9}
	applied to the four centered terms, give
	\begin{equation}\label{23}
	 \begin{aligned}
	 \lim_{\varepsilon\to 0}\mathbb E[\mathcal R\mid A_\varepsilon]
	 &=\exp\left\{
	 \int_0^T\langle\sigma(\varphi_s)^{-1}J\nabla H(\varphi_s),
	 \sigma(\varphi_s)^{-1}\dot\varphi_s\rangle\,\mathrm ds\right.\\
	 &\qquad \qquad \left.
	 -\frac12\int_0^T|\sigma(\varphi_s)^{-1}J\nabla H(\varphi_s)|^2\,\mathrm ds
	 +\frac12\int_0^T(\operatorname{div}\sigma)(\varphi_s)\cdot
	 (\sigma(\varphi_s)^{-1}J\nabla H(\varphi_s))\,\mathrm ds
	 \right\}.
	 \end{aligned}
	\end{equation}

	\textbf{Second step.}
	The exact representation \eqref{9} gives
	\begin{equation*}
	 \mathbb P\bigl(Y\in\mathcal K_U(\varphi,\varepsilon)\bigr)
	 =\mathbb P(\|W-\eta\|_{\infty,*}\le\varepsilon).
	\end{equation*}
	Under \(\widehat{\mathbb P}\), the process \(\hbar=W-\eta\) is Brownian and
	\(D_\varepsilon=\{\|\hbar\|_{\infty,*}\le\varepsilon\}\). Therefore
	\begin{align*}
	 \mathbb P(\|W-\eta\|_{\infty,*}\le\varepsilon)
	 &=\mathbb E_{\widehat{\mathbb P}}
	 [Z_\eta\mathbf1_{D_\varepsilon}]\\
	 &=\exp\{-\|\eta\|_{\mathbb H_0}^2/2\}
	 \mathbb P(\|W\|_{\infty,*}\le\varepsilon)
	 \mathbb E_{\widehat{\mathbb P}}\left[
	 \exp\left\{-\int_0^T
	 \langle\dot\eta_s,\mathrm d\hbar_s\rangle\right\}
	 \middle|D_\varepsilon\right].
	\end{align*}
	Corollary~\ref{corollary vector CM maximum ball} shows that the last
	factor converges to one. Hence
	\begin{equation*}
	 \lim_{\varepsilon\to 0}
	 \frac{\mathbb P(\|W-\eta\|_{\infty,*}\le\varepsilon)}
	 {\mathbb P(\|W\|_{\infty,*}\le\varepsilon)}
	 =\exp\left\{-\frac12\int_0^T|\dot\eta_s|^2\,\mathrm ds\right\}.
	\end{equation*}
	Since \(\dot\eta=\sigma(\varphi)^{-1}\dot\varphi\), this becomes
	\begin{equation}\label{26}
	 \lim_{\varepsilon\to 0}
	 \frac{\mathbb P\bigl(Y\in\mathcal K_U(\varphi,\varepsilon)\bigr)}
	 {\mathbb P(\|W\|_{\infty,*}\le\varepsilon)}
	 =\exp\left\{-\frac12\int_0^T
	 |\sigma(\varphi_s)^{-1}\dot\varphi_s|^2\,\mathrm ds\right\},
	\end{equation}
	where \(\mathbb P(\|W\|_{\infty,*}\le\varepsilon)\) is independent of \(\varphi\).

	\textbf{Last step.}
	By \eqref{tube probability factorization}, the required probability ratio
	is the product
	\[
	 \frac{\mathbb P(X\in\mathcal K_U(\varphi,\varepsilon))}
	 {\mathbb P(\|W\|_{\infty,*}\le\varepsilon)}
	 =\mathbb E[\mathcal R\mid A_\varepsilon]\,
	 \frac{\mathbb P(A_\varepsilon)}{\mathbb P(\|W\|_{\infty,*}\le\varepsilon)}.
	\]
	Both factors have finite limits by \eqref{23} and \eqref{26}. Their
	exponents add, and
	\[
	 \begin{aligned}
	 &\quad \left\langle
	 \sigma(\varphi_s)^{-1}J\nabla H(\varphi_s),
	 \sigma(\varphi_s)^{-1}\dot\varphi_s\right\rangle
	 -\frac12\left|\sigma(\varphi_s)^{-1}J\nabla H(\varphi_s)\right|^2
	 -\frac12\left|\sigma(\varphi_s)^{-1}\dot\varphi_s\right|^2\\
	 &=-\frac12\left|\sigma(\varphi_s)^{-1}
	 \bigl(J\nabla H(\varphi_s)-\dot\varphi_s\bigr)\right|^2.
	 \end{aligned}
	\]
	Consequently,
	\begin{equation*}
	 \lim_{\varepsilon\to 0}
	 \frac{\mathbb P\bigl(X\in\mathcal K_U(\varphi,\varepsilon)\bigr)}
	 {\mathbb P(\|W\|_{\infty,*}\le\varepsilon)}
	 =\exp\left\{-\frac12\mathcal A(\varphi)\right\},
	\end{equation*}
	where \(\mathcal A\) is exactly the functional in \eqref{OM action}.
\end{proof}

\section{Proof of Theorem \ref{T2}}

\begin{proof}[Proof of Theorem \ref{T2}]
	Write \(\sigma=[\sigma_1,\ldots,\sigma_{2n}]\). By \((C4)\),
	\[
	 \sigma_\alpha=J\nabla G_\alpha,\qquad \alpha=1,\ldots,2n.
	\]
	In canonical coordinates \(x=(q_1,\ldots,q_n,p_1,\ldots,p_n)\),
	\[
	 \sigma_\alpha
	 =\left(\partial_{p_1}G_\alpha,\ldots,\partial_{p_n}G_\alpha,
	 -\partial_{q_1}G_\alpha,\ldots,-\partial_{q_n}G_\alpha\right)^\top.
	\]
	Hence equality of the mixed derivatives gives
	\begin{align*}
	 \operatorname{div}\sigma_\alpha
	 =\sum_{j=1}^n
	 \left(\partial_{q_jp_j}^2G_\alpha
	 -\partial_{p_jq_j}^2G_\alpha\right)=0.
	\end{align*}
	Equivalently,
	\(\operatorname{div}\sigma_\alpha=\operatorname{tr}(JD^2G_\alpha)=0\),
	because \(D^2G_\alpha\) is symmetric and \(J\) is skew-symmetric.
	Thus \(\operatorname{div}\sigma=\mathbf 0\), and Theorem~\ref{T1} gives
	\begin{equation*}
	 \mathcal A(\varphi)
	 =\int_0^T\left|\sigma(\varphi_s)^{-1}
	 \bigl(\dot\varphi_s-J\nabla H(\varphi_s)\bigr)\right|^2\,\mathrm ds
	 \ge0.
	\end{equation*}

	Let \(\mathscr A_{x_0}(\mathcal O)\) be the fixed-initial,
	free-terminal admissible class in Definition~\ref{definition OM action}.
	For \(\varphi\in\mathscr A_{x_0}(\mathcal O)\),
	define
	\[
	 R_\varphi(s):=\sigma(\varphi_s)^{-1}
	 \bigl(\dot\varphi_s-J\nabla H(\varphi_s)\bigr).
	\]
	Uniform ellipticity makes \(x\mapsto\sigma(x)^{-1}\) bounded, while
	\(\dot\varphi\in L^2([0,T];\mathbb R^{2n})\), so
	\(R_\varphi\in L^2([0,T];\mathbb R^{2n})\) and
	\[
	 \mathcal A(\varphi)=\|R_\varphi\|_{L^2([0,T];\mathbb R^{2n})}^2.
	\]
	The deterministic orbit \(\varphi^*\) satisfies
	\(\dot\varphi^*=J\nabla H(\varphi^*)\), and therefore
	\(R_{\varphi^*}=0\) and \(\mathcal A(\varphi^*)=0\). Since
	\(\mathcal A\ge0\), this path is a global minimizer on
	{\(\mathscr A_{x_0}(\mathcal O)\)}.

	Conversely, if \({\varphi\in\mathscr A_{x_0}(\mathcal O)}\) is a minimizer, then
	\(\mathcal A(\varphi)=0\). Thus \(R_\varphi=0\) in \(L^2\), and the
	invertibility of \(\sigma(\varphi_s)\) gives
	\[
	 \dot\varphi_s=J\nabla H(\varphi_s)\quad\text{for a.e. }s,
	 \qquad \varphi_0=x_0.
	\]
	Both \(\varphi\) and \(\varphi^*\) satisfy the corresponding integral
	equation. Since \(J\nabla H\) is Lipschitz,
	\[
	 |\varphi_t-\varphi_t^*|
	 \le L\int_0^t|\varphi_s-\varphi_s^*|\,\mathrm ds.
	\]
	Gronwall's inequality yields \(\varphi_t=\varphi_t^*\) for all
	\(t\in[0,T]\). Thus \(\varphi^*\) is the unique action minimizer.

	It remains to translate this variational statement into a path-probability
	statement. For any \({\varphi\in\mathscr A_{x_0}(\mathcal O)}\), Theorem~\ref{T1}
	and the fact that the normalizing small-ball probability is the same for
	all reference paths give
	\[
	 \lim_{\varepsilon\to 0}
	 \frac{\mathbb P(X\in\mathcal K_U(\varphi,\varepsilon))}
	 {\mathbb P(X\in\mathcal K_U(\varphi^*,\varepsilon))}
	 =\exp\left\{-\frac12
	 \bigl(\mathcal A(\varphi)-\mathcal A(\varphi^*)\bigr)\right\}.
	\]
	If \(\varphi\ne\varphi^*\), uniqueness of the zero of the action implies
	\(\mathcal A(\varphi)>0=\mathcal A(\varphi^*)\), so the last limit is
	strictly smaller than one. Therefore \(\varphi^*\) is the unique most
	probable path with respect to the maximum tubes
	\(\mathcal K_U(\cdot,\varepsilon)\)
	in the sense of Definition~\ref{definition MPP}.
\end{proof}

\section{Proof of Theorem \ref{T3}}

\begin{proof}[Proof of Theorem \ref{T3}]
	Set \(b:=J\nabla H\). In It\^o form, equation \eqref{31} is
	\begin{equation}\label{32}
	 \mathrm dX_t^\gamma
	 =\bigl[b(X_t^\gamma)+\gamma^2c(X_t^\gamma)\bigr]\,\mathrm dt
	 +\gamma\sigma(X_t^\gamma)\,\mathrm dW_t,
	\end{equation}
	where
	\begin{equation}\label{33}
	 c(x):=\frac12\sum_{\alpha=1}^{2n}
	 D\sigma_\alpha(x)\sigma_\alpha(x).
	\end{equation}
	Indeed, for the \(i\)-th component,
	\begin{align*}
	 \gamma\sum_{\alpha=1}^{2n}
	 \sigma_{i\alpha}(X_t^\gamma)\circ\mathrm dW_t^\alpha
	 &=
	 \gamma\sum_{\alpha=1}^{2n}
	 \sigma_{i\alpha}(X_t^\gamma)\,\mathrm dW_t^\alpha
	 +\frac{\gamma}{2}\sum_{\alpha=1}^{2n}
	 \mathrm d[\sigma_{i\alpha}(X^\gamma),W^\alpha]_t\\
	 &=
	 \gamma\sum_{\alpha=1}^{2n}
	 \sigma_{i\alpha}(X_t^\gamma)\,\mathrm dW_t^\alpha
	 +\frac{\gamma^2}{2}\sum_{\alpha=1}^{2n}\sum_{j=1}^{2n}
	 \partial_j\sigma_{i\alpha}(X_t^\gamma)
	 \sigma_{j\alpha}(X_t^\gamma)\,\mathrm dt,
	\end{align*}
	which proves \eqref{32}--\eqref{33}. The
	Stratonovich--It\^o correction is of order \(\gamma^2\) and vanishes in
	the limiting controlled equation.

	By hypothesis,
	\[
	 \bigcup_{\varphi\in\overline{\mathbb A}}\varphi([0,T])
	 \subset K\Subset\mathbb D.
	\]
	Choose open sets
	\[
	 K\Subset V_1\Subset V_2\Subset\mathbb D
	\]
	and choose \(\chi\in C_c^\infty(\mathbb R^{2n};[0,1])\) with
	\(\chi=1\) on \(V_1\) and \(\operatorname{supp}\chi\subset V_2\).
	Define on \(\mathbb D\)
	\[
	 \widehat b:=\chi b,\qquad
	 \widehat c:=\chi c,\qquad
	 \widehat\sigma:=\chi\sigma
	\]
	and extend these products by zero to \(\mathbb R^{2n}\). Since
	\(\operatorname{supp}\chi\Subset V_2\Subset\mathbb D\), the zero extensions are globally bounded
	Lipschitz coefficients
	\(\widehat b,\widehat c,\widehat\sigma\) on \(\mathbb R^{2n}\) which agree
	with \(b,c,\sigma\) on \(V_1\). Let \(\widehat X^\gamma\) solve
	\begin{equation}\label{34}
	 \mathrm d\widehat X_t^\gamma
	 =[\widehat b(\widehat X_t^\gamma)
	 +\gamma^2\widehat c(\widehat X_t^\gamma)]\,\mathrm dt
	 +\gamma\widehat\sigma(\widehat X_t^\gamma)\,\mathrm dW_t,
	 \qquad \widehat X_0^\gamma=x_0.
	\end{equation}

	We apply \cite[Theorem~2.1]{62} with its parameter
	\(\varepsilon=\gamma^2\), and verify hypotheses H1--H6 in the notation of
	that theorem. For \(\phi\in C([0,T];\mathbb R^{2n})\), set
	\[
	 b_\varepsilon(t,\phi)
	 :=\widehat b(\phi_t)+\varepsilon\widehat c(\phi_t),
	 \qquad
	 \sigma_\varepsilon(t,\phi):=\widehat\sigma(\phi_t).
	\]
	These maps are predictable and continuous in the stopped path, uniformly
	in \(t\), and \(t\mapsto\widehat\sigma(\phi_t)\) is square integrable.
	Thus H1 holds. Moreover,
	\[
	 \sup_{(t,\phi)}
	 |b_\varepsilon(t,\phi)-\widehat b(\phi_t)|
	 \le\varepsilon\|\widehat c\|_{0;\mathbb R^{2n}},
	 \qquad
	 \sigma_\varepsilon(t,\phi)=\widehat\sigma(\phi_t),
	\]
	which proves H2. The global Lipschitz property of the cutoff coefficients
	gives pathwise uniqueness and strong existence for \eqref{34}; hence H3 is
	satisfied.

	For a deterministic control
	\(u\in L^2([0,T];\mathbb R^{2n})\), the limiting equation is
	\begin{equation}\label{controlled skeleton}
	 \dot\varphi_t=\widehat b(\varphi_t)
	 +\widehat\sigma(\varphi_t)u_t,
	 \qquad \varphi_0=x_0.
	\end{equation}
	The right-hand side is measurable in time and globally Lipschitz in the
	state with integrable Lipschitz coefficient \(C(1+|u_t|)\). Therefore
	\eqref{controlled skeleton} has a unique absolutely continuous solution,
	which defines the skeleton map \(\Gamma_{x_0}(u)=\varphi\) and proves H4.
	If \(\int_0^T|u_s|^2\,\mathrm ds\le R\), boundedness of the coefficients
	gives
	\[
	 \sup_{0\le t\le T}|\varphi_t|
	 \le |x_0|+C(T+\sqrt{TR})
	\]
	and, for \(0\le s\le t\le T\),
	\[
	 |\varphi_t-\varphi_s|
	 \le C\bigl(|t-s|+\sqrt R\,|t-s|^{1/2}\bigr).
	\]
	Consequently, the skeletons generated by the weak \(L^2\)-ball of radius
	\(\sqrt R\) form a relatively compact subset of the path space.

	To verify H5, let \(u_k\rightharpoonup u\) in \(L^2\), with
	\(\int_0^T|u_k(s)|^2\,\mathrm ds\le R\), and write
	\(\varphi^k=\Gamma_{x_0}(u_k)\). The preceding bounds yield a uniformly
	convergent subsequence, say \(\varphi^k\to\varphi\). The control term splits
	as
	\[
	 \begin{aligned}
	 &\int_0^t\widehat\sigma(\varphi_s^k)u_k(s)\,\mathrm ds
	 -\int_0^t\widehat\sigma(\varphi_s)u(s)\,\mathrm ds\\
	 &\quad=
	 \int_0^t\bigl[\widehat\sigma(\varphi_s^k)
	 -\widehat\sigma(\varphi_s)\bigr]u_k(s)\,\mathrm ds
	 +\int_0^t\widehat\sigma(\varphi_s)
	 \bigl(u_k(s)-u(s)\bigr)\,\mathrm ds.
	 \end{aligned}
	\]
	The first integral converges to zero uniformly in \(t\) by the Lipschitz
	bound and Cauchy--Schwarz. For the second one, define
	\[
	 g_{t,j}(s):=\mathbf1_{[0,t]}(s)
	 \widehat\sigma(\varphi_s)^\top e_j,
	 \qquad 0\le t\le T,\quad 1\le j\le2n.
	\]
	For each \(j\), the set \(\{g_{t,j}:0\le t\le T\}\) is compact in
	\(L^2\). Weak convergence is uniform on compact subsets of the dual, so
	\[
	 \sup_{0\le t\le T}\left|
	 \int_0^t\widehat\sigma(\varphi_s)
	 \bigl(u_k(s)-u(s)\bigr)\,\mathrm ds
	 \right|\longrightarrow0.
	\]
	Passing to the limit in the integral equation shows that
	\(\varphi=\Gamma_{x_0}(u)\). Uniqueness identifies every subsequential
	limit; hence \(\Gamma_{x_0}\) is continuous on each weak \(L^2\)-ball,
	as required by H5.

	It remains to verify H6 for varying predictable controls. Let
	\(\varepsilon_k=\gamma_k^2\downarrow0\), and let \(u^k\) be predictable with
	\[
	 \int_0^T|u_s^k|^2\,\mathrm ds\le R
	 \quad\text{almost surely, uniformly in }k.
	\]
	The controlled equation is
	\[
	 \mathrm d\widehat X_t^{\gamma_k,u^k}
	 =\left[\widehat b(\widehat X_t^{\gamma_k,u^k})
	 +\gamma_k^2\widehat c(\widehat X_t^{\gamma_k,u^k})
	 +\widehat\sigma(\widehat X_t^{\gamma_k,u^k})u_t^k\right]\mathrm dt
	 +\gamma_k\widehat\sigma(\widehat X_t^{\gamma_k,u^k})\,\mathrm dW_t.
	\]
	Boundedness of the coefficients, Cauchy--Schwarz, and the
	Burkholder--Davis--Gundy inequality give
	\[
	 \sup_k\mathbb E\left[
	 \sup_{0\le t\le T}|\widehat X_t^{\gamma_k,u^k}|^4\right]
	 \le C_R.
	\]
	The same estimates applied on \([s,t]\) yield
	\[
	 \mathbb E\left[
	 |\widehat X_t^{\gamma_k,u^k}
	 -\widehat X_s^{\gamma_k,u^k}|^4\right]
	 \le C_R|t-s|^2,
	 \qquad 0\le s\le t\le T,
	\]
	uniformly in \(k\). Indeed, the controlled drift contributes at most
	\(C_R|t-s|^2\), while the stochastic integral has fourth moment bounded
	by \(C\gamma_k^4|t-s|^2\). Kolmogorov's tightness criterion therefore
	shows that \(\{\widehat X^{\gamma_k,u^k}\}_k\) is tight in
	\(C([0,T];\mathbb R^{2n})\). Finally,
	\[
	 \sup_k\int_0^T\mathbb E\left[
	 \|\widehat\sigma(\widehat X_s^{\gamma_k,u^k})\|_{\mathrm{HS}}^2
	 \right]\,\mathrm ds
	 \le T\sup_{x\in\mathbb R^{2n}}
	 \|\widehat\sigma(x)\|_{\mathrm{HS}}^2<\infty.
	\]
	This verifies H6. Thus all hypotheses of
	\cite[Theorem~2.1]{62} are satisfied.
	The laws of \(\widehat X^\gamma\) on \(\mathscr C_{x_0}\) consequently
	satisfy an LDP with speed \(\gamma^{-2}\) and good rate function
	\begin{equation}\label{35}
	 \widehat{\mathbb I}(\varphi)
	 =\inf\left\{\frac12\int_0^T|u_s|^2\,\mathrm ds:
	 \begin{array}{l}
	 \varphi\in H^1([0,T];\mathbb R^{2n}),\ \varphi_0=x_0,\\
	 \dot\varphi_s=\widehat b(\varphi_s)
	 +\widehat\sigma(\varphi_s)u_s\ \text{a.e.}
	 \end{array}\right\}.
	\end{equation}
	The limiting skeleton is therefore
	\eqref{controlled skeleton}, with quadratic control cost
	\(\frac12\int_0^T|u_s|^2\,\mathrm ds\).

	For every \(\varphi\in\overline{\mathbb A}\), the image of \(\varphi\) is
	contained in \(K\), where the extended coefficients equal the original
	ones. On this set, \eqref{controlled skeleton} is equivalent to
	\[
	 \sigma(\varphi_s)u_s
	 =\dot\varphi_s-J\nabla H(\varphi_s)
	 \quad\text{for a.e. }s.
	\]
	Condition \((C2)\) makes \(\sigma(\varphi_s)\) invertible, and hence the
	control in \eqref{35} is unique:
	\[
	 u_s=\sigma(\varphi_s)^{-1}
	 \bigl(\dot\varphi_s-J\nabla H(\varphi_s)\bigr).
	\]
	Substitution into the control energy gives
	\[
	 \frac12\int_0^T|u_s|^2\,\mathrm ds
	 =\frac12\int_0^T
	 \left|\sigma(\varphi_s)^{-1}
	 \bigl(\dot\varphi_s-J\nabla H(\varphi_s)\bigr)\right|^2\,\mathrm ds.
	\]
	If \(\varphi\notin H^1([0,T];\mathbb R^{2n})\), no \(L^2([0,T];\mathbb R^{2n})\)-control can satisfy
	\eqref{controlled skeleton}, so the infimum is \(+\infty\). Hence
	\begin{equation}\label{LDP rate identity}
	 \widehat{\mathbb I}(\varphi)=\mathbb I_{\mathbb D}(\varphi),
	 \qquad \varphi\in\overline{\mathbb A}.
	\end{equation}
	Thus \(\widehat{\mathbb I}=\mathbb I_{\mathbb D}\) on the uniformly
	localized set \(\overline{\mathbb A}\); the global LDP may therefore be
	restricted to this set using the local action \(\mathbb I_{\mathbb D}\).

	It remains to identify the relevant events. Couple \(X^\gamma\) and
	\(\widehat X^\gamma\) with the same Brownian motion. The coefficients
	coincide on \(V_1\). Set
	\[
	 \rho^\gamma:=\inf\{t:X_t^\gamma\notin V_1\}.
	\]
	Set likewise
	\[
	 \widehat\rho^\gamma:=\inf\{t:\widehat X_t^\gamma\notin V_1\}.
	\]
	Pathwise uniqueness implies
	\[
	 X_{t\wedge\rho^\gamma\wedge\widehat\rho^\gamma}^\gamma
	 =\widehat X_{t\wedge\rho^\gamma\wedge\widehat\rho^\gamma}^\gamma,
	 \qquad 0\le t\le T,
	\]
	and consequently \(\rho^\gamma=\widehat\rho^\gamma\) on the coupled
	probability space. Since every path in \(\mathbb A\) has its entire range
	in \(K\Subset V_1\), the event \(\{\widehat X^\gamma\in\mathbb A\}\)
	implies \(\widehat\rho^\gamma>T\) and therefore
	\(X^\gamma=\widehat X^\gamma\) on \([0,T]\). Conversely,
	\(\{X^\gamma\in\mathbb A,\tau_{\mathbb D}^\gamma>T\}\) implies
	\(\rho^\gamma>T\) and the same equality of paths. Thus
	\begin{equation}\label{localization equality}
	 \mathbb P\bigl(X^\gamma\in\mathbb A,
	 \tau_{\mathbb D}^\gamma>T\bigr)
	 =\mathbb P(\widehat X^\gamma\in\mathbb A).
	\end{equation}
	Applying the global LDP lower bound to \(\mathbb A^\circ\) and the upper
	bound to \(\overline{\mathbb A}\), then using
	\eqref{LDP rate identity}--\eqref{localization equality}, give
	\[
	-\inf_{\varphi\in \mathbb A^\circ}\mathbb I_{\mathbb D}(\varphi)
	\le
	\liminf_{\gamma\to 0}\gamma^2
	\log\mathbb P\bigl(X^\gamma\in\mathbb A,
	\tau_{\mathbb D}^\gamma>T\bigr)
	\le
	\limsup_{\gamma\to 0}\gamma^2
	\log\mathbb P\bigl(X^\gamma\in\mathbb A,
	\tau_{\mathbb D}^\gamma>T\bigr)
	\le
	-\inf_{\varphi\in\overline{\mathbb A}}\mathbb I_{\mathbb D}(\varphi).
	\]
	This proves the theorem.
\end{proof}

Under \((C4)\), on every uniformly compactly localized admissible class,
\[
\mathcal A(\varphi)
=
\int_0^T
\left|
\sigma(\varphi_s)^{-1}
\bigl(\dot\varphi_s-J\nabla H(\varphi_s)\bigr)
\right|^2\,\mathrm ds
=
2\mathbb I_{\mathbb D}(\varphi).
\]
Hence the fixed-noise small-tube principle and the small-noise deviation
principle are governed by the same quadratic action, up to the normalization
factor \(2\).

\section{Proof of Theorem \ref{T4}}

\begin{proof}[Proof of Theorem \ref{T4}]
	A finite-time deterministic orbit contained in
	\(\mathbb D_{\mathrm{aa}}\) has compact range and therefore has range compactly
	contained in \(\mathbb D_{\mathrm{aa}}\Subset\mathcal O\). By
	Theorem~\ref{T2}, for every \(x_0\in\mathcal O\), the deterministic
	orbit segment
	\[
	 \dot X_t=J\nabla H(X_t),\qquad X_0=x_0,
	\]
	which remains compactly contained in \(\mathcal O\) is the unique most
	probable path with that initial point and a free terminal point.
	We now express this deterministic equation in the action--angle chart.
	Set
	\[
	 \widetilde H(\theta,I):=(H\circ\Psi)(\theta,I)
	 =H_0(I)+P(\theta,I)
	\]
	and let \(J_0=\left(\begin{smallmatrix}0&I_n\\-I_n&0\end{smallmatrix}\right)\)
	denote the standard symplectic matrix in the \((\theta,I)\)-coordinates.
	The symplecticity of \(\Psi\) is equivalent to
	\[
	 D\Psi(\theta,I)J_0D\Psi(\theta,I)^\top=J.
	\]
	Since
	\[
	 \nabla\widetilde H(\theta,I)
	 =D\Psi(\theta,I)^\top\nabla H(\Psi(\theta,I)),
	\]
	we have
	\begin{align*}
	 D\Psi(\theta,I)J_0\nabla\widetilde H(\theta,I)
	 &=D\Psi(\theta,I)J_0D\Psi(\theta,I)^\top
	 \nabla H(\Psi(\theta,I))\\
	 &=J\nabla H(\Psi(\theta,I)).
	\end{align*}
	Consequently, \(X_t=\Psi(\theta_t,I_t)\) solves the deterministic
	Hamiltonian equation in the original phase space if and only if
	\((\theta_t,I_t)\) solves
	\begin{equation}\label{36}
	 \dot\theta=\partial_I(H_0+P)(\theta,I),
	 \qquad
	 \dot I=-\partial_\theta P(\theta,I).
	\end{equation}

	For \(P=0\), equation \eqref{36} reduces to
	\[
	 \dot I=0,\qquad
	 \dot\theta=\omega(I),\qquad
	 \omega(I):=\partial_IH_0(I).
	\]
	Thus every set
	\(\mathcal T_I^0:=\mathbb T^n\times\{I\}\) is invariant and carries the
	rigid rotation
	\[
	 \theta_t=\theta_0+t\omega(I),\qquad I_t=I.
	\]
	The Kolmogorov nondegeneracy means
	\[
	 \det D_I\omega(I)=\det D_I^2H_0(I)\ne0,
	\]
	so the frequency map is locally nondegenerate.
	We now invoke the quantitative result of
	\cite[Theorem~4, Part~I]{6}. Its hypotheses and notation are matched
	explicitly with the present Hamiltonian. Set
	\[
	 \tau:=\nu-1,\qquad
	 \eta:=\|P\|_{C^l(\mathbb T^n\times\mathbb U)},
	 \qquad
	 \mathcal K_0:=\max\{1,\|H_0\|_{C^l(\mathbb U)}\}.
	\]
	The assumptions \(l>2\nu\) and \(\nu>n\) give
	\[
	 l>2\nu=2(\tau+1)>2n\ge4,
	\]
	so the differentiability requirement of the cited
	theorem holds. Define
	\[
	 \mathcal T_0
	 :=\sup_{I\in\mathbb U}
	 \bigl\|(D_I^2H_0(I))^{-1}\bigr\|_2,
	 \qquad
	 \Theta:=\mathcal T_0\mathcal K_0.
	\]
	Both quantities are finite by the hypotheses of Theorem~\ref{T4}. The
	exponent in the smallness condition of
	\cite[Theorem~4, Part~I]{6} is
	\[
	 a:=\frac{1}{l-2\nu}
	 \max\left\{
	 \left(6+\frac{2l}{\nu}\right)(l+\nu)-2l(l-\nu),
	 \frac{2l(l+3\nu)}{\nu}
	 \right\}.
	\]
	Consequently, there exists
	\(c_{\mathrm{KAM}}=c_{\mathrm{KAM}}(n,\tau,l)\in(0,1)\) such that the KAM
	conclusion holds whenever
	\[
	 0<\alpha\le c_{\mathrm{KAM}}\mathcal K_0
	\]
	and
	\[
	 \eta\le
	 c_{\mathrm{KAM}}
	 \mathcal K_0^{-(l+2\nu)/(l-2\nu)}
	 \Theta^{-a}\alpha^{2l/(l-2\nu)}.
	\]
	These inequalities are compatible with the asymptotic choice stated in
	Theorem~\ref{T4}. The case \(\eta=0\) is immediate because the
	Hamiltonian is then integrable, so assume \(\eta>0\). Choose
	\(C_*>0\) so that
	\[
	 c_{\mathrm{KAM}}
	 \mathcal K_0^{-(l+2\nu)/(l-2\nu)}
	 \Theta^{-a}
	 C_*^{2l/(l-2\nu)}\ge1
	\]
	and set
	\[
	 \alpha:=C_*
	 \eta^{(l-2\nu)/(2l)}
	 =C_*\eta^{1/2-\nu/l}.
	\]
	The second KAM inequality then follows identically, whereas the first
	holds for all sufficiently small \(\eta\). The comparison constant \(C_*\)
	depends only on \(n,\nu,l,\mathcal K_0\), and \(\mathcal T_0\), and hence
	on the fixed integrable Hamiltonian and action domain, not on \(\eta\).

	For \(r>0\), let \(B_r(I)\) be the open ball in the maximum norm of
	\(\mathbb R^n\), and put
	\[
	 B_r(A):=\bigcup_{I\in A}B_r(I),
	 \qquad A\subset\mathbb R^n.
	\]
	Define
	\[
	 \Delta_\alpha^\tau
	 :=\left\{\omega\in\mathbb R^n:
	 |\omega\cdot k|\ge\frac{\alpha}{|k|_1^\tau}
	 \text{ for every }k\in\mathbb Z^n\setminus\{0\}\right\}.
	\]
	With
	\[
	 \alpha_*:=\alpha^{1/(l-2\nu)},
	\]
	the interior action domain and its Diophantine subset are
	\[
	 \mathbb U'
	 :=\{I\in\mathbb U:B_{\alpha_*}(I)\subset\mathbb U\}
	\]
	and
	\[
	 \mathbb U_\alpha
	 :=\{I\in\mathbb U':
	 \partial_IH_0(I)\in\Delta_\alpha^\tau\}.
	\]
	The interior restriction prevents the KAM deformation from reaching
	the boundary of the action domain.
	We verify that the parameter set is nonempty for all sufficiently small
	positive \(\eta\). Choose \(\overline I\in\mathbb U\) and \(r>0\) such
	that \(B_{2r}(\overline I)\Subset\mathbb U\). Since
	\(\alpha_*=\alpha^{1/(l-2\nu)}\to0\), one has
	\(B_r(\overline I)\subset\mathbb U'\) for sufficiently small \(\eta\).
	The inverse function theorem and
	\(\det D_I^2H_0(\overline I)\ne0\) provide an open ball
	\(B_\delta(\overline I)\Subset B_r(\overline I)\) on which the frequency
	map \(I\mapsto\partial_IH_0(I)\) is a diffeomorphism onto an open set.
	Because \(\tau=\nu-1>n-1\), the resonant-slab estimate on every bounded
	frequency ball \(V\) gives
	\[
	 \operatorname{Leb}\bigl(V\setminus\Delta_\alpha^\tau\bigr)
	 \le C_V\alpha
	 \sum_{k\in\mathbb Z^n\setminus\{0\}}
	 |k|_1^{-(\tau+1)}.
	\]
	The series converges. Taking a ball
	\(V\Subset\partial_IH_0(B_\delta(\overline I))\) and then decreasing
	\(\eta\), hence \(\alpha\), shows that
	\(V\cap\Delta_\alpha^\tau\ne\varnothing\). Its inverse image belongs to
	\(\mathbb U_\alpha\). Thus the Cantor family constructed below is not
	empty. When \(\eta=0\), the unperturbed invariant tori already give the
	asserted family, independently of this argument.

	By \cite[Theorem~4, Part~I]{6}, there exist a Cantor-like set
	\(\mathbb U_*\subset\mathbb U\), a Whitney \(C^2\) function
	\(H_*:\mathbb U_*\to\mathbb R\), and an embedding
	\[
	 \Phi_*:\mathbb U_*\times\mathbb T^n
	 \longrightarrow\mathbb T^n\times\mathbb U
	\]
	whose restriction to each torus is of class \(C^\nu\), such that
	\[
	 \widetilde H\bigl(\Phi_*(I_*,\vartheta)\bigr)=H_*(I_*),
	 \qquad
	 (I_*,\vartheta)\in\mathbb U_*\times\mathbb T^n.
	\]
	The frequency-preserving parameter map
	\[
	 G^*:=(\partial_{I_*}H_*)^{-1}\circ\partial_IH_0:
	 \mathbb U_\alpha\longrightarrow\mathbb U_*
	\]
	is a bi-Lipschitz homeomorphism onto \(\mathbb U_*\) and satisfies
	\[
	 \partial_{I_*}H_*\bigl(G^*(I)\bigr)
	 =\partial_IH_0(I),
	 \qquad I\in\mathbb U_\alpha.
	\]
	In particular, the label of a torus in the perturbed Cantor set is not
	the unperturbed action itself but its image under \(G^*\). The theorem
	also gives
	\[
	 B_{\alpha_*/2}(\mathbb U_*)\subset\mathbb U
	\]
	and the displacement bound
	\[
	 \begin{aligned}
	 \|G^*-\operatorname{id}\|_{0;\mathbb U_\alpha}
	 &\le
	 \eta^{3\tau/[2(l+\nu)]}
	 \alpha^{(l+1)/(l+\nu)}
	 \mathcal K_0^{-\tau/[2(l+\nu)]}\\
	 &\quad{}\times
	 \Theta^{-1-2l\tau/[\nu(l+\nu)]},
	 \end{aligned}
	\]
	together with
	\[
	 \operatorname{Lip}_{\mathbb U_\alpha}
	 (G^*-\operatorname{id})<\frac12.
	\]
	These estimates, together with the embedding estimate in
	\cite[Theorem~4, estimate~(6)]{6}, quantify the assertion that the
	persistent tori are small deformations of the corresponding
	unperturbed tori.

	Fix \(I_0\in\mathbb U_\alpha\), and set
	\[
	 I_*:=G^*(I_0),
	 \qquad
	 \omega:=\partial_IH_0(I_0).
	\]
	The definition of \(\mathbb U_\alpha\) gives
	\[
	 |k\cdot\omega|
	 \ge\frac{\alpha}{|k|_1^\tau},
	 \qquad
	 k\in\mathbb Z^n\setminus\{0\},
	\]
	while the frequency-preserving identity gives
	\[
	 \partial_{I_*}H_*(I_*)=\partial_IH_0(I_0)=\omega.
	\]
	Define the perturbed torus embedding by
	\[
	 K_{I_0,\eta}(\vartheta)
	 :=\Phi_*(I_*,\vartheta)
	 =\Phi_*\bigl(G^*(I_0),\vartheta\bigr).
	\]
	It follows from \cite[Remark~5(i)]{6} that
	\(K_{I_0,\eta}(\mathbb T^n)\) is an invariant Lagrangian Kronecker
	torus of \(\widetilde H\), and the Hamiltonian flow satisfies the exact
	conjugacy relation
	\[
	 \Phi_{\widetilde H}^t\circ K_{I_0,\eta}
	 =K_{I_0,\eta}\circ R_\omega^t,
	 \qquad
	 R_\omega^t(\vartheta):=\vartheta+t\omega.
	\]
	The inclusion
	\(B_{\alpha_*/2}(\mathbb U_*)\subset\mathbb U\) and the range of
	\(\Phi_*\) ensure that
	\[
	 K_{I_0,\eta}(\mathbb T^n)
	 \Subset\mathbb T^n\times\mathbb U.
	\]

	Since \(\nu>n\ge2\), the embedding \(K_{I_0,\eta}\) is at least
	\(C^1\). Differentiating the flow conjugacy at \(t=0\) is therefore
	legitimate and yields the KAM invariance equation
	\begin{equation}\label{KAM invariance equation}
	 DK_{I_0,\eta}(\vartheta)\omega
	 =J_0\nabla\widetilde H(K_{I_0,\eta}(\vartheta)).
	\end{equation}
	For every \(\vartheta_0\in\mathbb T^n\), let
	\[
	 y_t:=K_{I_0,\eta}(\vartheta_0+t\omega).
	\]
	By the chain rule and \eqref{KAM invariance equation},
	\[
	 \dot y_t
	 =DK_{I_0,\eta}(\vartheta_0+t\omega)\omega
	 =J_0\nabla\widetilde H(y_t).
	\]
	Thus \(y_t\) is a trajectory of the deterministic Hamiltonian flow in
	action--angle coordinates. Mapping it to the original phase space by
	\[
	 X_t^{I_0,\eta}:=\Psi(y_t),
	\]
	and using the symplectic identity established above, we obtain
	\begin{align*}
	 \dot X_t^{I_0,\eta}
	 &=D\Psi(y_t)\dot y_t\\
	 &=D\Psi(y_t)J_0\nabla\widetilde H(y_t)\\
	 &=J\nabla H(\Psi(y_t))\\
	 &=J\nabla H(X_t^{I_0,\eta}).
	\end{align*}
	Hence
	\[
	 \mathcal T_{I_0,\eta}
	 :=\Psi\bigl(K_{I_0,\eta}(\mathbb T^n)\bigr)
	 \Subset\mathbb D_{\mathrm{aa}}\Subset\mathcal O
	\]
	is invariant under the deterministic Hamiltonian flow. Since both
	\(K_{I_0,\eta}(\mathbb T^n)\) and \(\Psi\) are Lagrangian and
	symplectic, respectively, \(\mathcal T_{I_0,\eta}\) is a Lagrangian
	torus. The conjugacy above shows that its internal motion is the
	quasi-periodic rotation with the unchanged Diophantine frequency
	\(\omega\).

	For every finite \(T>0\), the orbit
	\(X^{I_0,\eta}|_{[0,T]}\) has compact range in \(\mathcal O\) and is
	admissible in Theorem~\ref{T2}. Condition \((C4)\) reduces the
	Onsager--Machlup action to its nonnegative quadratic part. Since
	\(X^{I_0,\eta}\) satisfies the deterministic Hamiltonian equation,
	\[
	 \mathcal A(X^{I_0,\eta})
	 =\int_0^T
	 \left|\sigma(X_s^{I_0,\eta})^{-1}
	 \bigl(\dot X_s^{I_0,\eta}
	 -J\nabla H(X_s^{I_0,\eta})\bigr)\right|^2\,\mathrm ds
	 =0.
	\]
	Conversely, if
	\(\varphi\in\mathscr A_{X_0^{I_0,\eta}}(\mathcal O)\) and
	\(\mathcal A(\varphi)=0\), then
	\[
	 \dot\varphi_s=J\nabla H(\varphi_s)
	 \quad\text{for almost every }s\in[0,T].
	\]
	Because \(\varphi_0=X_0^{I_0,\eta}\), uniqueness for the deterministic
	Hamiltonian equation implies
	\(\varphi=X^{I_0,\eta}\). Therefore every other admissible path with
	the same initial point has strictly positive action. Applying
	Theorem~\ref{T1} to both tube probabilities gives
	\[
	 \lim_{\varepsilon\to0}
	 \frac{\mathbb P(X\in\mathcal K_U(\varphi,\varepsilon))}
	 {\mathbb P(X\in\mathcal K_U(X^{I_0,\eta},\varepsilon))}
	 =
	 \exp\left\{-\frac12
	 \bigl[\mathcal A(\varphi)-\mathcal A(X^{I_0,\eta})\bigr]\right\}.
	\]
	Using \(\mathcal A(X^{I_0,\eta})=0\), the right-hand side becomes
	\[
	 \exp\left\{-\frac12\mathcal A(\varphi)\right\}<1
	\]
	unless \(\varphi=X^{I_0,\eta}\). Thus every
	finite-time deterministic orbit on a surviving KAM torus is the unique
	center of the most-probable \(U\)-coordinate tubes with its initial point
	fixed.

	Finally, after replacing \(\sigma\) by
	\(\gamma\sigma\), Theorem~\ref{T3} applies with speed
	\(\gamma^{-2}\) and localized action
	\[
	 \mathbb I_{\mathbb D}(\varphi)
	 =\frac12\int_0^T
	 \left|\sigma(\varphi_s)^{-1}
	 \bigl(\dot\varphi_s-J\nabla H(\varphi_s)\bigr)\right|^2\,\mathrm ds
	\]
	on the admissible class. Under \((C4)\),
	\(\mathcal A(\varphi)=2\mathbb I_{\mathbb D}(\varphi)\), completing the
	connection between the most probable KAM dynamics and the small-noise
	deviation principle.
\end{proof}

\begin{example}
	We give an explicit state-dependent diffusion satisfying
	\((C1)\)--\((C4)\). Let \(n=2\), write
	\(x=(q_1,q_2,p_1,p_2)\), and fix
	\(0<I_j^-<I_j^+<\infty\). Consider the product annulus
	\[
	\mathbb D
	 =\left\{(q,p)\in\mathbb R^4:
	 I_j^-<\frac{q_j^2+p_j^2}{2}<I_j^+,\ j=1,2\right\}.
	\]
	Choose bounded open rectangles
	\[
	 \mathbb U_{\mathrm{aa}}\Subset\mathbb U_{\mathcal O}
	 \Subset(I_1^-,I_1^+)\times(I_2^-,I_2^+).
	\]
	The action--angle map
	\[
	 \Psi(\theta,I)=
	 \bigl(\sqrt{2I_1}\cos\theta_1,
	 \sqrt{2I_2}\cos\theta_2,
	 -\sqrt{2I_1}\sin\theta_1,
	 -\sqrt{2I_2}\sin\theta_2\bigr)
	\]
	is a diffeomorphism from
	\(\mathbb T^2\times((I_1^-,I_1^+)\times(I_2^-,I_2^+))\) onto
	\(\mathbb D\). Direct calculation gives
	\(\Psi^*(\mathrm dq_1\wedge\mathrm dp_1+
	\mathrm dq_2\wedge\mathrm dp_2)=
	\mathrm d\theta_1\wedge\mathrm dI_1+
	\mathrm d\theta_2\wedge\mathrm dI_2\), so \(\Psi\) is symplectic. Set
	\[
	 \mathcal O:=\Psi(\mathbb T^2\times\mathbb U_{\mathcal O}),
	 \qquad
	 \mathbb D_{\mathrm{aa}}:=\Psi(\mathbb T^2\times\mathbb U_{\mathrm{aa}});
	\]
	then \(\mathbb D_{\mathrm{aa}}\Subset\mathcal O\Subset\mathbb D\).
	For nonzero constants \(\kappa_1,\kappa_2\), define
	\[
	 \sigma(q,p)=
	 \begin{pmatrix}
	 \kappa_1p_1&0&1&0\\
	 0&\kappa_2p_2&0&1\\
	 1&0&0&0\\
	 0&1&0&0
	 \end{pmatrix}.
	\]
	The matrix is symmetric and genuinely state dependent.

	Define
	\[
	 U(y_1,y_2,y_3,y_4)
	 =\left(y_3+\frac{\kappa_1}{2}y_1^2,\
	 y_4+\frac{\kappa_2}{2}y_2^2,\ y_1,\ y_2\right).
	\]
	Its inverse is
	\[
	 U^{-1}(q_1,q_2,p_1,p_2)
	 =\left(p_1,p_2,\
	 q_1-\frac{\kappa_1}{2}p_1^2,\
	 q_2-\frac{\kappa_2}{2}p_2^2\right).
	\]
	Set
	\(\widetilde{\mathcal O}:=U^{-1}(\mathcal O)\). Direct differentiation
	gives
	\[
	 DU(y)=
	 \begin{pmatrix}
	 \kappa_1y_1&0&1&~0~\\
	 0&\kappa_2y_2&0&~1~\\
	 1&0&0&~0~\\
	 0&1&0&~0~
	 \end{pmatrix}
	 =\sigma(U(y)),
	\]
	which verifies \((C3)\).

	With \(J=\left(\begin{smallmatrix}0&I_2\\-I_2&0\end{smallmatrix}\right)\),
	let
	\[
	 G_1=\frac{\kappa_1}{2}p_1^2-q_1,\qquad
	 G_2=\frac{\kappa_2}{2}p_2^2-q_2,\qquad
	 G_3=p_1,\qquad G_4=p_2.
	\]
	For example,
	\[
	 J\nabla G_1=(\kappa_1p_1,0,1,0)^\top=\sigma_1,
	 \qquad
	 J\nabla G_3=(1,0,0,0)^\top=\sigma_3,
	\]
	and the remaining two columns are checked in the same way. Hence
	\(\sigma_\alpha=J\nabla G_\alpha\) for
	\(\alpha=1,\ldots,4\), so \((C4)\) holds and
	\(\operatorname{div}\sigma=\mathbf 0\).

	The inverse matrix is
	\[
	 \sigma(q,p)^{-1}=
	 \begin{pmatrix}
	 ~0~&0&1&0\\
	 ~0~&0&0&1\\
	 ~1~&0&-\kappa_1p_1&0\\
	 ~0~&1&0&-\kappa_2p_2
	 \end{pmatrix}.
	\]
	Since \(\mathbb D\) is bounded,
	\[
	 \sup_{x\in\overline{\mathbb D}}\|\sigma(x)\|_2<\infty,
	 \qquad
	 \sup_{x\in\overline{\mathbb D}}\|\sigma(x)^{-1}\|_2<\infty.
	\]
	Thus the singular values of \(\sigma\) have
	uniform positive lower and finite upper bounds, and there are
	\(0<\lambda\le\Lambda\) such that
	\[
	 \lambda|v|^2\le v^\top\sigma(x)\sigma(x)^\top v
	 \le\Lambda|v|^2,\qquad x\in\overline{\mathbb D}.
	\]
	This verifies \((C2)\).

	In the symplectic action--angle chart \(\Psi\), take
	\[
	 H_0(I)=\omega_1I_1+\omega_2I_2+\frac12(I_1^2+I_2^2),
	 \qquad
	 P(\theta,I)=\delta\cos(\theta_1-\theta_2),
	 \qquad H\circ\Psi=H_0+P.
	\]
	Because \(I_j^->0\),
	\[
	 \cos(\theta_1-\theta_2)
	 =\frac{q_1q_2+p_1p_2}
	 {\sqrt{q_1^2+p_1^2}\sqrt{q_2^2+p_2^2}}
	\]
	is smooth on a neighborhood of \(\overline{\mathbb D}\). Hence \((C1)\) follows from smoothness and compactness. Moreover, \(D_I^2H_0=I_2\), so \(H_0\) is Kolmogorov nondegenerate. For sufficiently small \(|\delta|\), Theorem~\ref{T4} applies on \(\mathbb D_{\mathrm{aa}}\Subset\mathcal O\): the Diophantine invariant tori persist, up to a slight deformation, in the most-probable-path sense. Replacing \(\sigma\) by \(\gamma\sigma\) gives the local large deviation bounds of Theorem~\ref{T3}.
\end{example}

\bibliographystyle{plain}
\bibliography{Ref}
\end{document}